\documentclass[final,onefignum,onetabnum]{siamart220329}
\usepackage{bm}
\usepackage{amsfonts}
\usepackage{amssymb}
\usepackage{amsmath}
\usepackage{graphicx}
\usepackage{epstopdf}
\usepackage{algorithmic}
\usepackage{enumitem}
\usepackage{booktabs}
\usepackage{multirow}
\usepackage{amsopn}
\usepackage{changes} 
\ifpdf
\DeclareGraphicsExtensions{.eps,.pdf,.png,.jpg}
\else
\DeclareGraphicsExtensions{.eps}
\fi

\DeclareMathOperator{\spans}{span}
\DeclareMathOperator{\diag}{diag}
 
\newcommand{\cov}{\mathrm{cov}}
\newcommand{\EE}{\mathrm{E}}
\newcommand{\NN}{\mathcal{N}}

\newsiamremark{remark}{Remark}
\newsiamthm{exper}{Experiment}
 
\allowdisplaybreaks[4]
 
\headers{AN LSQR-BASED NULL SPACE ALGORITHM}{JINZHI HUANG}

\title{An LSQR-based algorithm for large-scale null space computations\thanks{Submitted to the editors DATE.
		\funding{This work was supported by the Youth Fund of the National Science Foundation of China (Grant No.~12301485) and the Jiangsu Province Youth Science and Technology Talent Support Program (Grant No.~JSTJ-2025-828).}}}

\author{Jinzhi Huang\thanks{School of Mathematical Sciences, Soochow 
		University, 215006 Suzhou, China 
		(\url{jzhuang21@suda.edu.cn}).}}
 
\begin{document}

\maketitle
 
\begin{abstract}
Computing the null space and null vectors of large-scale matrices is 
a fundamental task in numerical linear algebra and scientific computing. 
In this paper, an LSQR-based algorithm, termed LSQRNV, is proposed to 
compute a null vector of a large-scale rank-deficient matrix $A$ 
from an initial vector. 
The theoretical convergence properties of the algorithm are analyzed, 
demonstrating that it converges to a numerical null vector of $A$ at 
a rate dictated by its numerical condition number, and a rigorous 
accuracy bound is derived for the resulting approximation.
By integrating a deflation technique with a tailored termination 
criterion, LSQRNV is extended to LSQRNS, which computes an orthonormal 
basis for the numerical null space of $A$ and explicitly determines its nullity. 
The aforementioned accuracy bound is rigorously generalized to 
the computed approximate numerical null space. 
Furthermore, with appropriate parameter settings, 
LSQRNV efficiently determines whether a large matrix is numerically 
rank-deficient or has full column rank. 
Numerical experiments corroborate the theoretical results, demonstrating 
the robustness, efficiency, and effectiveness of LSQRNS for large-scale 
null-space computations.   	  
\end{abstract}
 
\begin{keywords}
 null vector, null space, nullity, rank, LSQR algorithm, convergence, deflation
\end{keywords}

\begin{MSCcodes}
 65F10, 15A03, 15A06
\end{MSCcodes}

\section{Introduction}\label{sec:1} 
Null space and null vectors are intrinsic properties of a matrix, 
and they play important roles in a variety of applications, such as 
determining the rank and range space of a matrix 
\cite{golub2012matrix,saad2003}, conducting linear discriminant 
analysis \cite{guo2006null,zhang2016learning}, developing 
torque-controlled and autonomous robots 
\cite{dietrich2015overview,lin2015learning}, mitigating 
catastrophic forgetting in continual learning 
\cite{wang2026vpt,zhao2023rethinking},  
and solving linear inverse problems in image restoration 
\cite{gualdron2026gsnr,wang2023zero}. 
 
For a general matrix $A\in\mathbb{R}^{M\times N}$, Coleman and 
Pothen \cite{coleman1986null,coleman1987null} propose a pair of  
two-stage algorithms that utilize bipartite matching to compute 
the fundamental and triangular bases for its null space 
$\mathcal{N}(A)$. Gilbert and Heath 
\cite{gilbert1987computing} develop several QR factorization-based 
algorithms to construct sparse bases for $\mathcal{N}(A)$. 
However, the exorbitant computational and storage demands of these 
algorithms render them prohibitive for large-scale matrices. 
In response, various solvers tailored to specific matrix structures 
have emerged. For instance, Berry et al. \cite{berry1985algorithm} 
propose an algorithm based on Gaussian elimination and orthogonal 
factorization for banded matrices. 
Gotsman and Toledo \cite{gotsman2008computation} employ sparse LU 
factorization with partial pivoting for rectangular matrices with 
small nullity. Foster and Davis \cite{foster2013algorithm} employ 
rank-revealing sparse QR factorization for matrices with very small 
or large nullity, whereas Park and Nakatsukasa \cite{park2023fast} 
design sketch-and-solve methods for tall and skinny matrices. 
Crucially, all these methods are based on certain factorizations of 
$A$ or relevant matrices, restricting their practical applicability 
primarily to matrices with particular structures. 
For a general large-scale  sparse matrix $A$, Kressner and Shao 
\cite{kressner2026randomized} propose a randomized small-block 
Lanczos method that computes an orthonormal basis for 
$\mathcal{N}(A)$ by applying the block Lanczos algorithm 
\cite{golub1981block} to $A^TA$.  
However, working with the cross-product matrix $A^TA$ inherently 
squares the singular values of $A$. In the presence of moderately 
small singular values of $A$, this squaring effect clusters the 
eigenvalues of $A^TA$ densely near the origin, which not only  
degrades the convergence rate but also obscures the numerical 
separation between the true null space and the noise subspace, 
thereby compromising the accuracy of the computed basis. 
Moreover, Lanczos process-based methods incur substantial 
computational overhead from partial reorthogonalization and 
prohibitive storage costs to maintain all Lanczos vectors, 
while implicit restart compromises the overall convergence.
 
In this paper, we propose a novel algorithm for constructing 
\textit{an orthonormal basis} for the null space 
$\mathcal{N}(A)$ of a general, large-scale, rank-deficient matrix $A$.
Our method offers three distinct advantages over prior work. 
First, it avoids explicit factorizations of $A$ or associated 
matrices, thereby bypassing the prohibitive fill-in typically 
associated with direct methods. Second, it operates directly 
on $A$ rather than the cross-product matrix $A^TA$, preserving 
both rapid convergence and high numerical precision. 
Third, it eliminates the need for partial reorthogonalization, 
implicit restarting, and the storage of past Lanczos vectors, 
yielding significant computational efficiency and a minimal 
memory footprint. 
  
Our work originates from the observation that the range space 
$\mathcal{R}(A^T)$ and the null space $\mathcal{N}(A)$ are 
orthogonal complements in $\mathbb{R}^N$. 
Any vector $y\in\mathbb{R}^N$ admits a unique orthogonal decomposition:
\begin{equation}\label{ydecompose}
	y = A^Tz+x \quad \mbox{with} \quad 
	z\in\mathcal{R}(A), \quad x\in\mathcal{N}(A). 
\end{equation}
Here, $x$ is the orthogonal projection of $y$ onto 
$\mathcal{N}(A)$, and the uniqueness of $z\in\mathcal{R}(A)$ 
is guaranteed by $\mathcal{R}(A)\perp\mathcal{N}(A^T)$.
Consequently, computing the null vector $x=y-A^Tz$ reduces 
to first determining the unknown $z\in\mathcal{R}(A)$.
To this end, notice that 
\begin{equation}\label{leastsq}
	z= \arg\min_{\tilde z\in\mathcal{R}(A)} \|y-A^T\tilde z\|. 
\end{equation}
We can employ the standard Krylov subspace 
method, such as the CGLS algorithm 
\cite[Sec.~7.4]{bjorck1996numerical} 
or its equivalent variant LSQR \cite{paige1982lsqr}, 
to solve above least squares problem and obtain 
$z\in\mathcal{R}(A)$. 
In this paper, we adopt the LSQR algorithm due to its superior 
numerical stability, and adapt it to efficiently compute the 
null vector $x$ of $A$ from a given $y\in\mathbb{R}^N$, 
yielding the LSQR-based Null Vector (LSQRNV) algorithm.
  
We rigorously analyze the convergence of LSQRNV and show that for a 
user-prescribed stopping threshold $\varepsilon>0$, it converges to 
a numerical null vector with an accuracy bound of 
$\mathcal{O}(\varepsilon)$ at a rate governed by 
$(\kappa-1)/(\kappa+1)$, where $\kappa$ is the effective 
numerical condition number. 
Furthermore, by integrating LSQRNV with a subspace deflation 
technique and an effective termination criterion, 
we develop the LSQRNS algorithm, which computes an orthonormal 
basis for the numerical null space of $A$ and explicitly 
determines its typically unknown nullity $n$. 
We also show that, with appropriate parameters, the algorithm 
effectively classifies whether a given matrix is numerically 
rank-deficient or has full column rank. 
This capability is crucial for safeguarding large-scale 
computations, as identifying the rank property is a 
prerequisite for selecting appropriate solvers in linear systems, 
determining the model order in factor-analytical approaches, 
and identifying signal subspaces in processing applications. 
Extensive numerical experiments corroborate the theoretical 
results, demonstrating the robustness, efficiency, 
and effectiveness of LSQRNS.
 
The rest of this paper is organized as follows. 
Section~\ref{sec:2} introduces the LSQRNV algorithm. 
Section~\ref{subsec:2-1} provides a rigorous analysis 
of the  convergence and accuracy of LSQRNV.  
Section~\ref{sec:3} develops the LSQRNS algorithm and 
derives accuracy bounds for the computed numerical null space. 
Section~\ref{sec:5} presents the numerical experiments, 
and section~\ref{sec:6} concludes the paper.
 
\section{The LSQRNV algorithm}\label{sec:2} 
For ease of presentation, without loss of generality, 
we assume that the vector $y$ in 
\eqref{ydecompose}--\eqref{leastsq} 
has unit length, i.e., $\|y\|=1$. 
 
The Lanczos bidiagonalization (LBD) process is originally 
proposed by Golub and Kahan \cite{golub1965calculating}, 
and further developed by Paige and Saunders \cite{paige1982lsqr} 
with lower bidiagonal variant. 
Given an initial vector $v_1=y$, the $i$-step lower bidiagonal 
LBD process (if not break down, i.e., $\alpha_j,\beta_j\neq0$, 
$1\leq j\leq i$) can be written as the matrix form: 
\begin{equation}\label{LBDmat}
	\left\{\begin{aligned}
		&A^TU_i=V_{i+1}B_i  \\[0.5em]
		&AV_{i+1}=U_{i} B_{i}^T+\alpha_{i+1}u_{i+1}e_{i+1,i+1}^T  
	\end{aligned}\right.
	\qquad\mbox{with}\qquad
	B_i=\begin{bmatrix}
		\alpha_1\!\!&&\\[-0.6em] \beta_1\!\!&\!\!\ddots&\\[-0.6em]
		 &\!\!\ddots\!\!&\!\!\alpha_{i}\\ &&\!\!\beta_{i}
	\end{bmatrix},
\end{equation} 
where $e_{j,i}$ denotes the $j$th column of the $i\times i$ 
identity matrix $I_i$, and the columns $u_1,\cdots,u_i$ and 
$v_1,\cdots,v_{i+1}$ of $U_i\in\mathbb{R}^{M\times i}$ and 
$V_{i+1}\in\mathbb{R}^{N\times(i+1)}$ form orthonormal bases 
of the $i$- and $(i+1)$-dimensional Krylov subspaces
\begin{equation}\label{searchspace}
\mathcal{U}_i=\mathcal{K}_i(AA^T,u_1)
\qquad\mbox{and}\qquad 
\mathcal{V}_{i+1}=\mathcal{K}_{i+1}(A^TA,v_1)	
\end{equation}
generated by the matrices $AA^T$, $A^TA$ and the vectors 
$u_1=\frac{Av_1}{\alpha_1}$, $v_1$, respectively. 
 
At the $i$th step, the LSQR algorithm for the least squares 
problem in \eqref{leastsq} seeks  an approximate solution 
$z_i\in\mathcal{U}_i$ that minimizes the residual norm, 
that is, the residual 
\begin{equation}\label{xiLSQR} 
	x_i=y-A^Tz_i 
\end{equation}  
satisfies 
\begin{equation}\label{resLSQR0} 
 \|x_i\| = \min_{\tilde z\in\mathcal{U}_i} \|y-A^T\tilde z\|.
\end{equation} 
Write $z_i=U_id_i$ for some $d_i\in\mathbb{R}^{i}$. 
Since $y=v_1$, it is known from 
\eqref{LBDmat}--\eqref{xiLSQR} that  
\begin{equation}\label{resLSQR}
	x_i =  v_1-A^TU_id_i = V_{i+1}(e_{1,i+1}-B_id_i),  
\end{equation}
where, by \eqref{resLSQR0}, $d_i$ solves the following 
$(i+1)\times i$ least squares problem
\begin{equation}\label{defci}
	d_i=\arg \min_{d\in\mathbb{R}^{i}}\|e_{1,i+1}-B_id\|. 
\end{equation} 
Compute the QR factorization of the $(i+1)\times(i+1)$ 
upper Hessenberg matrix
\begin{equation}\label{QRBe}
	\begin{bmatrix}B_i&e_{1,i+1}\end{bmatrix} 
	= \widetilde Q_i\widetilde R_i 
	= \begin{bmatrix}Q_i&q_{i+1}\end{bmatrix} \cdot 
	\begin{bmatrix}R_i&f_i\\\bm{0}&\varphi_{i}\end{bmatrix},
\end{equation} 
where $\widetilde Q_i\in\mathbb{R}^{(i+1)\times(i+1)}$ is 
orthogonal and $R_i\in\mathbb{R}^{i\times i}$ is upper bidiagonal. 
As long as the $i$-step LBD process does not break down, 
$[B_i, e_{1,i+1}]$ is nonsingular, meaning that $R_i$ is 
nonsingular and $\varphi_i\neq 0$. Therefore, applying 
\eqref{QRBe} to \eqref{defci} yields
\begin{equation}\label{defci2} 
	d_i=\arg\min_{d\in\mathbb{R}^{i}}
	\left\|\widetilde Q_i\left(
	\begin{bmatrix}f_i\\ \varphi_{i}\end{bmatrix}
	-\begin{bmatrix}R_i\\ \bm{0} \end{bmatrix}d\right)\right\| 
	=\arg\min_{d\in\mathbb{R}^{i}}\left\|
	\begin{bmatrix}f_i-R_id\\\varphi_{i} \end{bmatrix}\right\|
	=R_i^{-1}f_i.  
\end{equation} 
As a result, inserting \eqref{QRBe} and \eqref{defci2} into 
\eqref{resLSQR}, we obtain  
\begin{equation}\label{resLSQR2}
	x_i = V_{i+1}\begin{bmatrix}Q_i&q_{i+1}\end{bmatrix}
	\begin{bmatrix}f_i-R_id_i\\\varphi_{i}\end{bmatrix}  
	= \varphi_{i} V_{i+1} q_{i+1},  
\end{equation}   
which, by \eqref{xiLSQR}, serves as a reasonable 
approximation to the null vector $x$ of $A$.

Note that for a null vector of $A$, it is its direction 
rather its size that matters. Therefore, for the approximation 
$x_i$ in \eqref{resLSQR2}, supposing that $\varphi_i\neq0$, 
we can simply compute and store its normalized counterpart 
\begin{equation}\label{xi}
	\hat x_i = x_i/\varphi_i =V_{i+1}q_{i+1} 
\end{equation}
during the computations. Specifically, remind that the 
orthogonal matrix $\widetilde Q_i$ in \eqref{QRBe} is 
the product of $i$ Givens rotations designed to successively 
eliminate the subdiagonals $\beta_1,\cdots,\beta_i$ of $B_i$ 
in \eqref{LBDmat}. It is known shown in \cite{paige1982lsqr} that 
\begin{equation*}
	\widetilde Q_i=G_{1}^{(i+1)}G_{2}^{(i+1)}\cdots G_{i}^{(i+1)} 
	\quad\mbox{with}\quad
	G_{j}^{(i+1)} =   
	\begin{bmatrix} 
		I_{j-1}\!\!&&& \\[-0.2em] &{c}_j&{s}_j&  \\
		&-{s}_j&{c}_j& \\[-0.2em] &&&\!\! I_{i-j} 
	\end{bmatrix}, 
\end{equation*} 
where 
\begin{equation}\label{cjsj}
c_j = \frac{c_{j-1}\alpha_j}
{\sqrt{\beta_j^2+c_{j-1}^2\alpha_j^2}}
\qquad\mbox{and}\qquad
s_j = \frac{-\beta_j}{\sqrt{\beta_j^2+c_{j-1}^2\alpha_j^2}}	
\end{equation}
for $j=1,\dots,i$ with $c_0=1$.    
Exploiting the definition of $q_{i+1}$ in \eqref{QRBe}, 
we deduce 
\begin{eqnarray}\label{qj}
	q_{i+1}&=&
	\widetilde Q_ie_{i+1,i+1}=G_{1}^{(i+1)}G_{2}^{(i+1)}
	\cdots G_{i-1}^{(i+1)} \cdot G_{i}^{(i+1)}e_{i+1,i+1} \nonumber \\
	&=&\begin{bmatrix}G_{1}^{(i)}G_{2}^{(i)}
		\cdots G_{i-1}^{(i)} \!&\\&\!1\end{bmatrix}
	\begin{bmatrix}s_ie_{i,i}\\c_i \end{bmatrix} 
	=\begin{bmatrix}\widetilde Q_{i-1}\!&\\&\!1\end{bmatrix}
	\begin{bmatrix}s_ie_{i,i}\\c_i \end{bmatrix}    \nonumber \\
	&=& 
	\begin{bmatrix}s_i\widetilde Q_{i-1}e_{i,i}\\c_i\end{bmatrix}  
	=\begin{bmatrix}s_iq_i\\c_i \end{bmatrix},
\end{eqnarray} 
where $q_i\in\mathbb{R}^i$ is the last column of the Q-factor 
$\widetilde Q_{i-1}$ of $[B_{i-1},e_{1,i}]\in\mathbb{R}^{i\times i}$ 
in the $(i-1)$th step; see \eqref{QRBe}. 
As a consequence, combining \eqref{xi} and \eqref{qj}, we obtain
\begin{equation*}
	\hat x_i 
	= [V_i,v_{i+1}]\begin{bmatrix}s_iq_i \\ c_i \end{bmatrix} 
	= s_iV_iq_i+c_iv_{i+1} 
	= s_i\hat x_{i-1}+c_iv_{i+1}. 
\end{equation*}
This means, the unit-length approximate null vector $\hat x_i$ 
of $A$ can be updated efficiently.
Furthermore, premultiplying $q_{i+1}^T$ both sides 
of \eqref{QRBe} and taking the last elements, and then exploiting 
\eqref{qj} yields
\begin{equation}\label{varphi}
	\varphi_i=q_{i+1}^Te_{1,i+1} 
	=\begin{bmatrix}s_iq_i^T\!\! &\!\! c_i  \end{bmatrix} 
	\begin{bmatrix}	e_{1,i} \\ 0 \end{bmatrix} 
	=s_iq_i^Te_{1,i}=s_i\varphi_{i-1},\qquad
	i\geq1. 
\end{equation} 
Therefore, the residual norm $\|x_i\|=|\varphi_i|$ can also be 
updated successively during the calculations, where 
$\varphi_0=\|x_0\|=\|y\|=1$ as $z_0=\bm{0}$.  

The residual of $z_i$ with respect to the equivalent normal 
equation of the least squares problem in \eqref{leastsq} 
is defined by
\begin{equation}\label{residual}
	r_i=Ay-AA^Tz_i = Ax_i.
\end{equation} 
Clearly, $r_i$ vanishes if and only if $z_i=z$ and $x_i=x$. 
Therefore, for a user-prescribed threshold $\varepsilon>0$, once 
\begin{equation}\label{convergence}
	\|r_i\| \leq  \|A\|\|x_i\| \cdot \varepsilon, 
\end{equation} 
we claim that $z_i$ and $x_i$ have converged and stop the algorithm. 
Substituting \eqref{resLSQR2} into \eqref{residual}, and 
exploiting \eqref{LBDmat}, \eqref{QRBe} and \eqref{qj}, we obtain
\begin{eqnarray}\label{ri}
	\|r_i\|&=&\|\varphi_{i}AV_{i+1} q_{i+1}\|
	=\| \varphi_{i}\left(U_{i}B_{i}^T+
	\alpha_{i+1}u_{i+1}e_{i+1,i+1}^T\right)q_{i+1}\|   \nonumber\\
	&=&\| \varphi_{i}\left(U_{i}R_i^TQ_{i}^T+
	\alpha_{i+1}u_{i+1}e_{i+1,i+1}^T\right)q_{i+1}\| \nonumber\\
	&=&\alpha_{i+1} |\varphi_{i}\cdot e_{i+1,i+1}^Tq_{i+1}|
	=\alpha_{i+1}|\varphi_{i}c_i|,  
\end{eqnarray} 
meaning that $\|r_i\|$ can be calculated efficiently 
during the computations. 
As a result, by \eqref{resLSQR2}, we terminate the 
LSQR algorithm once 
\begin{equation}\label{conLSQR3}
	\alpha_{i+1}|c_i| \leq \tilde\sigma_1\varepsilon,
\end{equation} 
where $\tilde\sigma_1$ is an estimate for $\|A\|$.

\begin{remark}\label{remark2-1} 
If the LBD process breaks down with $\beta_i=0$ for some $i$, 
then \eqref{cjsj} and \eqref{varphi} show that $s_i=0$ and 
$\varphi_i=0$. This yields $x_i=x=\bm{0}$ and $y=A^Tz_i$, revealing 
that the starting vector $y$ in \eqref{ydecompose} has no component 
in $\mathcal{N}(A)$. On the other hand, if a break down occurs with
$\alpha_{i+1}=0$ and $\beta_1,\dots,\beta_{i}\neq0$, it follows 
from \eqref{cjsj} and \eqref{varphi} that $\varphi_i=s_i\dots 
s_0\varphi_0\neq0$. Moreover, \eqref{ri} indicates that $r_i=\bm{0}$, 
meaning that $x_i=x\neq\bm{0}$ is a nontrivial null vector of $A$. 
Therefore, $\alpha_{i+1}=0$ is a lucky event. 
\end{remark}

\begin{algorithm}[htbp]
	\caption{LSQRNV: The LSQR algorithm for the null vector of $A$.}
	\begin{algorithmic}[1]\label{alg2}
		\STATE{Set $\hat x_0= v_1=y$ and $c_0= \varphi_0=1$. 
			Calculate $\alpha_1=\|Av_1\|$ and 
			$u_1=Av_1/\alpha_1$.}
		
		\FOR{$i=1,2,\cdots, N$}
		
		\STATE{Compute $r_i=A^Tu_i-\alpha_{i}v_{i}$ and 
			$\beta_i=\|r_i\|$.}  
		
		\STATE{\textbf{if} $\beta_i=0$ \textbf{then} return $\bm{0}$;
			\textbf{else} compute $v_{i+1}=r_i/\beta_i$. 
			\textbf{fi} \label{step1}}   
		
		\STATE{Compute $t_i=Av_{i+1}-\beta_{i}u_{i}$ and
			$\alpha_{i+1}=\|t_i\|$.} 
		
		\STATE{Calculate $c_i=\frac{c_{i-1}\alpha_i}
			{\sqrt{\beta_i^2+c_{i-1}^2\alpha_i^2}}$ and 
			$s_i=\frac{-\beta_i}
			{\sqrt{\beta_i^2+c_{i-1}^2\alpha_i^2}}$, and 
			update $\varphi_i=s_i\varphi_{i-1}$. \label{step5}}
		
		\STATE{Update $\hat x_i=s_i \hat x_{i-1}+c_iv_{i+1}$. \label{step2}}
		
		\STATE{\textbf{if} $\alpha_{i+1}|c_{i}|\leq \tilde\sigma_1\varepsilon$  
			\textbf{then} return $x_{i}$; \textbf{else} 
			compute $u_{i+1}=t_i/\alpha_{i+1}$. \textbf{fi} \label{step3}}
		
		\ENDFOR
	\end{algorithmic}
\end{algorithm} 

Algorithm~\ref{alg2} summarizes the LSQR-based algorithm for the 
null vector problem \eqref{ydecompose} of a large-scale 
matrix $A$, referred to as LSQRNV hereafter.   
Step~\ref{step3} handles the lucky breakdown 
$\alpha_{i+1}=0$ within the convergence criterion \eqref{conLSQR3}. 
At each iteration, the algorithm requires two matrix-vector 
products, one with $A$ and the other with $A^T$, as well as $5M+8N$ 
flops. During the computations, it requires storing one 
$M$-dimensional vector $u_i$ and two $N$-dimensional vectors, 
$v_i$ and $x_i$.  
 
\section{Convergence and accuracy analysis of LSQRNV}\label{subsec:2-1}

In numerical computations, due to rounding errors, the stored 
matrix $A$ generally is mathematically of full column rank, 
even though its smallest singular values reside at the level 
of $\mathcal{O}(\|A\|\bm{\epsilon_{\mathrm{mach}}})$, where  
$\bm{\epsilon_{\mathrm{mach}}}$ is the machine precision. 
Specifically, for a user-prescribed convergence threshold 
$\varepsilon$ in \eqref{convergence}, assume that the singular 
values $\sigma_1\geq \dots\geq \sigma_{N}$  of $A$ satisfy 
\begin{equation}\label{sig2}
	\frac{1}{\kappa}=:\frac{\sigma_{\ell}} {\sigma_1}
	\gg \varepsilon \gtrsim
	\frac{\sigma_{\ell+1}}{\sigma_1}:= \bm{\epsilon}.
\end{equation} 
Such $A$ is referred to as numerically rank deficient with 
respect to $\bm{\epsilon}$, where $\ell=N-n$ is the numerical 
rank and $\kappa= \sigma_1/\sigma_{\ell}$ is  the numerical 
condition number.  
Consequently, the thin singular value decomposition (SVD) 
of $A$ can be partitioned as
\begin{equation}\label{svdA2}
	A = M\Sigma W^T = \begin{bmatrix}M_{\ell}&M_{\epsilon}\end{bmatrix}
	\begin{bmatrix} \Sigma_{\ell} &\\&\Sigma_{\epsilon}\end{bmatrix}
	\begin{bmatrix}W_{\ell}^T\\W_{\epsilon}^T \end{bmatrix} 
\end{equation}
with $\Sigma_{\ell}=\diag\{\sigma_1,\dots,\sigma_{\ell}\}$ and 
$\Sigma_{\epsilon}=\diag\{\sigma_{\ell+1},\dots,\sigma_{N}\}$. 
We define   
\begin{equation*} 
	\mathcal{R}_{\ell}=\spans\{W_{\ell}\}\qquad\mbox{and}\qquad
	\mathcal{N}_{\epsilon}=\spans\{W_{\epsilon}\} 
\end{equation*} 
as the numerical range space of $A^T$ and the numerical 
null space of $A$ associated with $\bm{\epsilon}$, respectively.  
In this section, we show that the LSQRNV algorithm proposed in 
Section~\ref{sec:2} approaches $\mathcal{N}_{\epsilon}$ at a 
rate bounded by $\frac{\kappa-1}{\kappa+1}$. We also establish 
an accuracy bound for the final computed approximation.
We remark that, up to scaling, LSQRNV generates an identical 
sequence of iterates $x_i$ if $A$ is replaced by $A/\sigma_1$. 
Thus, without loss of generality, we assume $\sigma_1=1$ 
throughout this section to ease the presentation. 
Consequently, all the singular values $\sigma_1,\dots,\sigma_{N}$ 
of $A$ reside in the interval $[0,1]$.
 
Given a normalized initial vector $y\in\mathbb{R}^{N}$, 
we can orthogonally decompose it using the SVD \eqref{svdA2} 
of $A$ as 
\begin{equation}\label{yxz}
	y = y_{\ell} + x_{\epsilon} 
	\quad\mbox{with}\quad
	y_{\ell}=W_{\ell}g_{\ell}\in\mathcal{R}_{\ell},\quad 
	x_{\epsilon}=W_{\epsilon}g_{\epsilon}\in\mathcal{N}_{\epsilon}, 
\end{equation} 
where $g_{\ell}=W_{\ell}^Ty$ and $g_{\epsilon} = W_{\epsilon}^Ty$ 
satisfy 
\begin{equation}\label{g1g2}
    \|g_{\ell}\| = \|y_{\ell}\| 
    = \sin\angle(\mathcal{N}_{\epsilon},y)
    \quad\mbox{and}\quad
    \|g_{\epsilon}\| = \|x_{\epsilon}\| 
    = \cos\angle(\mathcal{N}_{\epsilon},y) . 
\end{equation}
Here, $\angle(\mathcal{N}_{\epsilon},y)$ denotes the 
acute angle between $\mathcal{N}_{\epsilon}$ and $y$.  

Recall from Algorithm~\ref{alg2} that $u_1=Ay/\alpha_1$. 
For the approximation $x_i$ generated at the $i$th step 
of the LSQRNV algorithm, relations 
\eqref{searchspace}--\eqref{resLSQR0} imply that   
\begin{equation}\label{xii}
	x_i = y-A^T\tilde z_i=\psi_i(A^TA)y 
	\qquad\mbox{with}\qquad
	\psi_i=\arg \min_{\psi\in \Psi_i}\|\psi(A^TA)y\|,
\end{equation} 
where $\Psi_i$ denotes the set of polynomials $\psi$ 
of degree at most $i$ satisfying $\psi(0)=1$. 
For subsequent analysis, we choose a specific 
$\widehat \psi_i\in\Psi_i$ such that 
\begin{equation}\label{varphihat}
	\|\widehat\psi_i(\Sigma_{\ell}^2)g_{\ell}\| 
	=\min\limits_{\psi\in \Psi_i}\|\psi(\Sigma_{\ell}^2)g_{\ell}\|
	\leq\min_{\psi\in \Psi_i}\max_{1\leq j 
		\leq \ell}|\psi(\sigma_j^2)| \|g_{\ell}\|
	\leq 2\left(\frac{\kappa-1}{\kappa+1}\right)^{i}\|y_{\ell}\|.
\end{equation}
Here, the inequality follows from 
\cite[pp.~51--52]{greenbaum1997iterative} and \eqref{g1g2}, and 
$\kappa=\frac{\sigma_1}{\sigma_{\ell}}$ is the numerical condition 
number of $A$ with respect to $\bm{\epsilon}$; see \eqref{sig2}. 
Furthermore, since $\psi(0)=1$ for any $\psi\in\Psi_i$, there 
exists a unique polynomial $\phi\in\Phi_{i-1}$, where $\Phi_{i-1}$ 
denotes the space of polynomials of degree at most $i-1$, such that  
\begin{equation}\label{rho}
	\psi(\varsigma)=1+\varsigma\cdot\phi(\varsigma)
	\qquad\mbox{for}\qquad
	\varsigma\in\mathbb{R}.
\end{equation}
In particular, let $\phi_i$ and $\hat\phi_i$ be the unique 
polynomials in $\Phi_{i-1}$ satisfying, 
for any $\varsigma\in\mathbb{R}$, 
\begin{equation}\label{varphistar}
	\psi_i(\varsigma)=1+\varsigma\cdot \phi_{i}(\varsigma)
	\qquad\mbox{and}\qquad
	\widehat\psi_i(\varsigma)=1+\varsigma\cdot \hat\phi_{i}(\varsigma).  
\end{equation} 
To facilitate further discussion, we define the sets
\begin{equation}\label{PhiPi}
    \Psi_i^{\prime} = \{\psi_i,\widehat \psi_i\}
\qquad\mbox{and}\qquad
\Phi_{i-1}^{\prime} =\{\phi_{i},\hat\phi_{i}\}. 
\end{equation}

We first introduce the following lemma for later use.  

\begin{lemma}\label{lemma3}  
	For both polynomials $\phi\in\Phi_{i-1}^{\prime} $ 
	defined in \eqref{PhiPi}, it holds that
	\begin{equation}\label{rhosigma}
	\|\Sigma_{\epsilon}^2 \phi(\Sigma_{\epsilon}^2)\|
	\leq \Delta_i\bm{\epsilon}^2,
\end{equation}
where the matrix $\Sigma_{\epsilon}$ is as in \eqref{svdA2}, 
and the scalar  
\begin{equation}\label{deltai}
    \Delta_i = \max\{\max_{0\leq\varsigma\leq\bm{\epsilon}^2} 
    |\phi_i(\varsigma)|,\max_{0\leq\varsigma\leq\bm{\epsilon}^2}
    |\hat\phi_i(\varsigma)|  \}.
\end{equation} 
\end{lemma}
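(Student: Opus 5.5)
Since $\Sigma_{\epsilon}=\diag\{\sigma_{\ell+1},\dots,\sigma_N\}$ is diagonal, the plan is to reduce the matrix bound to a scalar one. For any polynomial $\phi$, the matrix $\Sigma_{\epsilon}^2\phi(\Sigma_{\epsilon}^2)$ is again diagonal, with diagonal entries $\sigma_j^2\phi(\sigma_j^2)$ for $j=\ell+1,\dots,N$. Its spectral norm is therefore
\begin{equation*}
\|\Sigma_{\epsilon}^2\phi(\Sigma_{\epsilon}^2)\| = \max_{\ell+1\leq j\leq N}\sigma_j^2\,|\phi(\sigma_j^2)|,
\end{equation*}
and it suffices to bound each of these scalars by $\Delta_i\bm{\epsilon}^2$.

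Under the normalization $\sigma_1=1$ adopted in this section, \eqref{sig2} gives $\bm{\epsilon}=\sigma_{\ell+1}$. Since the singular values are ordered decreasingly, every $j\geq \ell+1$ satisfies $0\leq\sigma_j\leq\sigma_{\ell+1}=\bm{\epsilon}$. Hence each $\varsigma_j:=\sigma_j^2$ lies in the interval $[0,\bm{\epsilon}^2]$, and we obtain
\begin{equation*}
\sigma_j^2|\phi(\sigma_j^2)|\leq \bm{\epsilon}^2\max_{0\leq\varsigma\leq\bm{\epsilon}^2}|\phi(\varsigma)|.
\end{equation*}

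For $\phi\in\Phi_{i-1}^{\prime}=\{\phi_i,\hat\phi_i\}$, the maximum on the right is at most $\Delta_i$ by the definition \eqref{deltai}, since $\Delta_i$ is the larger of the two maxima. Taking the maximum over $j$ then yields \eqref{rhosigma}.

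No step here is a real obstacle. The only point to state explicitly is that the scaling $\sigma_1=1$ makes $\bm{\epsilon}=\sigma_{\ell+1}/\sigma_1$ coincide with $\sigma_{\ell+1}$. Alternatively, without the scaling, one would use $\sigma_j\leq\sigma_{\ell+1}=\bm{\epsilon}\sigma_1$ and carry the factor $\sigma_1$ through the argument.
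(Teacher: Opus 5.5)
Your proposal is correct and follows essentially the paper's route: the paper bounds $\|\Sigma_{\epsilon}\|\leq\bm{\epsilon}$ and $\|\phi(\Sigma_{\epsilon}^2)\|=\max_{\ell+1\leq j\leq N}|\phi(\sigma_j^2)|\leq\Delta_i$ separately and multiplies them. You bound each diagonal entry $\sigma_j^2|\phi(\sigma_j^2)|$ directly, using the same normalization $\sigma_1=1$ and the same fact that each $\sigma_j^2$ lies in $[0,\bm{\epsilon}^2]$.
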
 
 
\begin{proof}
	Recall from \eqref{svdA2} that $\Sigma_{\epsilon}
	=\diag\{\frac{\sigma_{\ell+1}}{\sigma_1},\dots,
	\frac{\sigma_{N}}{\sigma_1}\}$.  
    Since $\frac{\sigma_j}{\sigma_1} \leq 
    \frac{\sigma_{\ell+1}}{\sigma_1} = \bm{\epsilon}$ 
    for $\ell+1\leq j\leq N$, we deduce 
    $\|\Sigma_{\epsilon}\|\leq \bm{\epsilon}$.  
Moreover, for either polynomial $\phi\in\Phi_{i-1}^{\prime}$, 
it follows from \eqref{deltai} that
	\begin{equation*} 
	\|\phi(\Sigma_{\epsilon}^2)\|  
    = \max_{\ell+1\leq j\leq N}|\phi(\sigma_{j}^2)| 	
    \leq \max_{0\leq\varsigma
    	\leq\bm{\epsilon}^2}|\phi(\varsigma)| \leq \Delta_i.  
	\end{equation*} 
Combining these two relations establishes \eqref{rhosigma}. 
\end{proof}

With the aid of Lemma~\ref{lemma3}, we present 
the first main result of this paper. 
 
\begin{theorem}\label{thm1}
Suppose that the initial vector $y\not\in\mathcal{R}_{\ell}$, 
and the scalar $\Delta_i$ defined in \eqref{deltai} fulfills
    $\Delta_i\bm{\epsilon}^2<1$. 
Then the $i$th iterate $x_i$ produced by LSQRNV satisfies 
	\begin{equation}\label{conv}
		\sin\angle(\mathcal{N}_{\epsilon},x_i) 
		\leq \frac{2\tan\angle(\mathcal{N}_{\epsilon},y)}
		{1-\Delta_i\bm{\epsilon}^2}
		\left(\frac{\kappa-1}{\kappa+1}\right)^{i}
		+ \frac{2\sqrt{\Delta_i}\bm{\epsilon}}
		{1-\Delta_i\bm{\epsilon}^2} ,  
	\end{equation}  
where $\kappa$ is the numerical condition number of $A$ 
with respect to $\bm{\epsilon}$; see \eqref{sig2}. 
\end{theorem}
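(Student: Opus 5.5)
The plan is to expand $x_i$ in the singular vector basis of \eqref{svdA2} and estimate separately its component in $\mathcal{R}_\ell$ and its component in $\mathcal{N}_\epsilon$. By \eqref{xii} and \eqref{yxz},
\begin{equation*}
x_i=\psi_i(A^TA)y=W_\ell\,\psi_i(\Sigma_\ell^2)g_\ell+W_\epsilon\,\psi_i(\Sigma_\epsilon^2)g_\epsilon .
\end{equation*}
Write $a_i=\|\psi_i(\Sigma_\ell^2)g_\ell\|$ and $b_i=\|\psi_i(\Sigma_\epsilon^2)g_\epsilon\|$. Then $\|x_i\|^2=a_i^2+b_i^2$, and
\begin{equation*}
\sin\angle(\mathcal{N}_\epsilon,x_i)=\frac{a_i}{\|x_i\|}\le \frac{a_i}{b_i}.
\end{equation*}
The assumption $y\notin\mathcal{R}_\ell$ gives $g_\epsilon\neq 0$, so all ratios below are well defined.

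\textbf{Step 1: lower bound for $b_i$.} For either $\psi\in\Psi_i^{\prime}$, \eqref{varphistar} gives $\psi(\Sigma_\epsilon^2)=I+\Sigma_\epsilon^2\phi(\Sigma_\epsilon^2)$ with the corresponding $\phi\in\Phi_{i-1}^{\prime}$. Lemma~\ref{lemma3} then yields
\begin{equation*}
(1-\Delta_i\bm{\epsilon}^2)\|g_\epsilon\|\le\|\psi(\Sigma_\epsilon^2)g_\epsilon\|\le(1+\Delta_i\bm{\epsilon}^2)\|g_\epsilon\|.
\end{equation*}
This is where the hypothesis $\Delta_i\bm{\epsilon}^2<1$ enters: it makes the lower bound, and hence $b_i$, strictly positive.

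\textbf{Step 2: upper bound for $a_i$ (the main step).} The iterate $x_i$ is not built from the Chebyshev-like polynomial $\widehat\psi_i$, so $a_i$ cannot be bounded directly by \eqref{varphihat}. Instead I use the optimality of $\psi_i$ in \eqref{xii}: $\|\psi_i(A^TA)y\|^2\le\|\widehat\psi_i(A^TA)y\|^2$. Splitting both sides into their $W_\ell$ and $W_\epsilon$ parts gives
\begin{equation*}
a_i^2\le\|\widehat\psi_i(\Sigma_\ell^2)g_\ell\|^2+\|\widehat\psi_i(\Sigma_\epsilon^2)g_\epsilon\|^2-\|\psi_i(\Sigma_\epsilon^2)g_\epsilon\|^2 .
\end{equation*}
By the two-sided bounds of Step~1, the difference of the last two terms is at most
\begin{equation*}
\bigl[(1+\Delta_i\bm{\epsilon}^2)^2-(1-\Delta_i\bm{\epsilon}^2)^2\bigr]\|g_\epsilon\|^2=4\Delta_i\bm{\epsilon}^2\|g_\epsilon\|^2 .
\end{equation*}
Using $\sqrt{p+q}\le\sqrt p+\sqrt q$ and then \eqref{varphihat} with $\|y_\ell\|=\|g_\ell\|$, I get
\begin{equation*}
a_i\le 2\Bigl(\frac{\kappa-1}{\kappa+1}\Bigr)^{i}\|g_\ell\|+2\sqrt{\Delta_i}\,\bm{\epsilon}\,\|g_\epsilon\| .
\end{equation*}

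\textbf{Step 3: combine.} Dividing the bound of Step~2 by $b_i\ge(1-\Delta_i\bm{\epsilon}^2)\|g_\epsilon\|$ and using \eqref{g1g2}, which gives $\|g_\ell\|/\|g_\epsilon\|=\tan\angle(\mathcal{N}_\epsilon,y)$, yields exactly \eqref{conv}.

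The step I expect to be the main obstacle is Step~2. The difficulty is transferring the polynomial estimate \eqref{varphihat} from $\widehat\psi_i$ to the actual LSQR polynomial $\psi_i$, while controlling the cross-term on $\mathcal{N}_\epsilon$. That cross-term is what produces the additive $2\sqrt{\Delta_i}\bm{\epsilon}$ term; it must be bounded for both $\psi_i$ and $\widehat\psi_i$ at once, which is why $\Delta_i$ in \eqref{deltai} takes the maximum over both polynomials.
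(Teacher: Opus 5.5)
Your proposal is correct and follows essentially the same route as the paper. You split $x_i=\psi_i(A^TA)y$ along $W_\ell$ and $W_\epsilon$, and use the optimality of $\psi_i$ against $\widehat\psi_i$ together with Lemma~\ref{lemma3} to get $\|\psi_i(\Sigma_\ell^2)g_\ell\|^2\le\|\widehat\psi_i(\Sigma_\ell^2)g_\ell\|^2+4\Delta_i\bm{\epsilon}^2\|x_{\epsilon}\|^2$; you then divide by the lower bound $(1-\Delta_i\bm{\epsilon}^2)\|x_{\epsilon}\|$ on the $\mathcal{N}_{\epsilon}$-component. The only cosmetic difference is that you get this lower bound directly from $\|x_i\|\ge b_i$, whereas the paper derives it for $\|x_i\|$ via \eqref{lowerbound3}.
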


\begin{proof}  
For any polynomial $\psi\in\Psi_i$, invoking relations 
\eqref{svdA2},  \eqref{yxz} and \eqref{rho} yields
\begin{equation}\label{varphiATAy}
	\psi(A^TA)y =  W_{\ell}\psi(\Sigma^2_{\ell})g_{\ell} 
	+ W_{\epsilon}\psi(\Sigma^2_{\epsilon})g_{\epsilon}
	= W_{\ell}\psi(\Sigma^2_{\ell})g_{\ell} + x_{\epsilon} 
	+ W_{\epsilon}\Sigma^2_{\epsilon}\phi(\Sigma^2_{\epsilon})g_{\epsilon}, 
\end{equation} 
where the last two terms on the right-hand side are orthogonal 
to the first one. Since the matrix $[W_{\ell},W_{\epsilon}]$ 
is orthogonal, it follows that $\|\psi(A^TA)y\|^2 
= \|\psi(\Sigma^2_{\ell})g_{\ell}\|^2 + \|x_{\epsilon} 
+ W_{\epsilon}\Sigma^2_{\epsilon}\phi(\Sigma^2_{\epsilon})g_{\epsilon}\|^2$. 
In particular, for both polynomials $\psi\in\Psi_i^{\prime}$ 
and their associated $\phi\in\Phi_{i-1}^{\prime}$ defined in 
\eqref{varphistar}, Lemma~\ref{lemma3} together with \eqref{g1g2} 
indicates that 
\begin{equation*}
     \|W_{\epsilon}\Sigma^2_{\epsilon}\phi(\Sigma^2_{\epsilon})g_{\epsilon}\| 
     \leq \|\Sigma^2_{\epsilon}\phi(\Sigma^2_{\epsilon})\|\|g_{\epsilon}\|
\leq\Delta_i\bm{\epsilon}^2\|x_{\epsilon}\|.
\end{equation*} 
Consequently, we obtain 
\begin{align}
	\|\psi(A^TA)y\|^2&\leq \|\psi(\Sigma^2_{\ell})g_{\ell}\|^2 
	+ \left(1+ \Delta_i\bm{\epsilon}^2 \right)^2\|x_{\epsilon}\|^2, \label{upperbound2}\\
	\|\psi(A^TA)y\|^2&\geq \|\psi(\Sigma^2_{\ell})g_{\ell}\|^2
	+ \left(1-\Delta_i\bm{\epsilon}^2 \right)^2\|x_{\epsilon}\|^2. \label{lowerbound2} 
\end{align} 
Evaluating \eqref{upperbound2} and \eqref{lowerbound2} at 
$\psi=\widehat \psi_i$ and $\psi_i$, respectively, and 
utilizing their definitions in  \eqref{xii} and \eqref{varphihat}, 
we arrive at 
\begin{align}
	\|\psi_i(A^TA)y\|^2 &\leq \|\widehat\psi_i(\Sigma^2_{\ell})g_{\ell}\|^2 
	+\left(1+ \Delta_i\bm{\epsilon}^2\right)^2\|x_{\epsilon}\|^2, \label{upperbound3}\\
	\|\psi_i(A^TA)y\|^2 &\geq	\|\widehat\psi_i(\Sigma^2_{\ell})g_{\ell}\|^2 
	+\left(1- \Delta_i\bm{\epsilon}^2\right)^2\|x_{\epsilon}\|^2. \label{lowerbound3}
\end{align}
Coupled with \eqref{xii}, the second relation immediately implies 
\begin{equation}\label{orthdecompose}
	\|x_i\| \geq (1-\Delta_i \bm{\epsilon}^2)\|x_{\epsilon}\|. 
\end{equation} 
Since $y\not\in\mathcal{R}_{\ell}$, \eqref{yxz} ensures that 
$x_{\epsilon}\neq\bm{0}$. 
This fact, combined with the assumption $\Delta_i\bm{\epsilon}^2<1$ 
guarantees that the right-hand side of the above relation is 
strictly positive. 
Furthermore, setting $\psi=\psi_i$ in \eqref{upperbound2} and 
\eqref{lowerbound2}, and incorporating \eqref{lowerbound3} and 
\eqref{upperbound3}, respectively,  we deduce  
\begin{equation}\label{boundkey}
	\|\widehat\psi_i(\Sigma^2_{\ell})g_{\ell}\|^2 
	- 4 \Delta_i\bm{\epsilon}^2 \|x_{\epsilon}\|^2
	\leq \|\psi_i(\Sigma^2_{\ell})g_{\ell}\|^2 
	\leq \|\widehat\psi_i(\Sigma^2_{\ell})g_{\ell}\|^2 
	+ 4 \Delta_i\bm{\epsilon}^2 \|x_{\epsilon}\|^2.
\end{equation} 
Therefore, by the definition of the sine of the angle between 
a subspace and a vector, utilizing \eqref{varphiATAy} and  
\eqref{orthdecompose}--\eqref{boundkey}, we obtain  
\begin{equation}\label{sin} 
	\sin\angle(\mathcal{N}_{\epsilon},x_i) 
	=\frac{\|W_{\ell}^Tx_i\|}{\|x_i\|}=
	\frac{\|\psi_i(\Sigma^2_{\ell})g_{\ell}\|}{\|x_i\|}
	\leq \frac{\|\widehat\psi_i(\Sigma^2_{\ell})g_{\ell}\| 
		+ 2\sqrt{\Delta_i}\bm{\epsilon} \|x_{\epsilon}\|}
	{(1- \Delta_i   \bm{\epsilon}^2)\|x_{\epsilon}\|}. 
\end{equation} 
Finally, substituting \eqref{varphihat} and \eqref{g1g2} 
into this relation establishes \eqref{conv}.      
\end{proof}
 

For the scalar $\Delta_i$ defined in \eqref{deltai}, the 
polynomial structure of $\phi_i$ and $\hat\phi_i$ implies 
that, for sufficiently small $\bm{\epsilon}$,
\begin{equation}\label{deltai2}
	\Delta_i = \max\{|\phi_i(0)|,|\hat\phi_i(0)|\} 
	+ \mathcal{O}(\bm{\epsilon}^2),
\end{equation} 
where, according to \eqref{varphistar}, $\phi_i(0)$ and 
$\hat\phi_i(0)$ represent the slopes of $\psi_i$ and 
$\widehat{\psi}_i$ at the origin, respectively. 
By \eqref{varphihat}, the polynomial
$\widehat \psi_i\in\Psi_i$ is constructed to minimize its 
magnitude at $\sigma_1^2,\dots,\sigma_{\ell}^2$  in a 
weighted least-squares sense.  
For $i\leq\ell$, standard analysis indicates that all
$i$ roots of $\widehat \psi_i$ are located in the interval 
$[\sigma_{\ell}^2,\sigma_1^2]$, and therefore $\widehat\psi_i$ 
decreases monotonically on  $[0,\sigma_{\ell}^2]$.  
Consequently, we have  
$|\hat\phi_i(0)|\sim\frac{1-\widehat\psi_i(\sigma_{\ell}^2)}
{\sigma_{\ell}^2} \leq \kappa^2.$  
In the early phase of LSQRNV, when 
$\|\widehat\psi_i(\Sigma^2_{\ell})g_{\ell}\|\gtrsim 
\mathcal{O}(\bm{\epsilon})$, 
 bound \eqref{boundkey} indicates that $\psi_i\approx\widehat\psi_i$ 
within an error margin of $\mathcal{O}(\bm{\epsilon})$, 
yielding $|\phi_i(0)|\approx |\widehat \phi_i(0)|$ and thus 
\begin{equation}\label{delta}
	\Delta_i\lesssim \kappa^2.
\end{equation}
 
Consequently, Theorem~\ref{thm1} establishes that the iterate 
$x_i$ generated by LSQRNV approaches the numerical null space 
$\mathcal{N}_{\epsilon}$ of $A$ at a rate governed by 
$\frac{\kappa-1}{\kappa+1}$, until an accuracy of 
$\mathcal{O}(\bm{\epsilon})$ is attained. 
Specifically, the better the numerical conditioning of $A$ is, 
the tighter this bound becomes, indicating faster convergence. 
Correspondingly, the richer information $y$ contains regarding 
$\mathcal{N}_{\epsilon}$ (i.e., the smaller 
$\tan\angle(\mathcal{N}_{\epsilon},y)$ is), the sooner this 
accuracy bound is reached.  
Furthermore, exploiting the SVD of $A$ in \eqref{svdA2}, any 
iterate $x_i$ admits the orthogonal decomposition 
\begin{equation*}
	x_i = W_{\ell}f_{\ell}+W_{\epsilon}f_{\epsilon} 
	\quad\mbox{with}\quad
	f_{\ell} =W_{\ell}^T x_i,\quad
	f_{\epsilon}=W_{\epsilon}^{T} x_i.	
\end{equation*}
Then $\sin\angle(\mathcal{N}_{\epsilon},x_i)
=\frac{\|f_{\ell}\|}{\|x_i\|}$ and
$\cos\angle(\mathcal{N}_{\epsilon},x_i)
=\frac{\|f_{\epsilon}\|}{\|x_i\|}$. 
Additionally, the orthogonal decomposition 
$Ax_i=M_{\ell}\Sigma_{\ell}f_{\ell}
+M_{\epsilon}\Sigma_{\epsilon}f_{\epsilon}$ 
yields the bounds $\sigma_{\ell}\|f_{\ell}\|\leq\|Ax_i\|
\leq\sigma_1\|f_{\ell}\|+\sigma_{\ell+1}\|f_{\epsilon}\|$. 
Consequently, utilizing \eqref{residual} and \eqref{sig2}, 
we deduce that 
\begin{equation}\label{accralation}
	\frac{1}{\kappa}\sin\angle(\mathcal{N}_{\epsilon},x_i) 
	\leq \frac{\|r_i\|}{\|A\|\|x_i\|} 
	\leq \sin\angle(\mathcal{N}_{\epsilon},x_i) 
	+ \bm{\epsilon}\cos\angle(\mathcal{N}_{\epsilon},x_i).
\end{equation} 
Given that $\varepsilon\gtrsim\bm{\epsilon}$ from  \eqref{sig2},  
the second inequality above guarantees that LSQRNV terminates 
via the criterion \eqref{convergence} no later than when  
$\sin\angle(\mathcal{N}_{\epsilon},x_i)$ reaches 
$\mathcal{O}(\bm{\epsilon})$. 

As a direct consequence of the first inequality in \eqref{accralation}, 
the following theorem establishes an accuracy bound for the converged 
numerical null vector.  

\begin{theorem}\label{thm2}
	Under the conditions of Theorem~\ref{thm1}, the approximation 
	$\tilde x = x_i$ returned by LSQRNV with the stopping criterion  
	\eqref{convergence} satisfies 
	\begin{equation}\label{accxke}
		\sin\angle(\mathcal{N}_{\epsilon},\tilde x )
		\leq \kappa \varepsilon.
	\end{equation} 
\end{theorem}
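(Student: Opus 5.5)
The proof combines the left inequality in \eqref{accralation} with the termination test \eqref{convergence}. Throughout, I keep the normalization $\sigma_1=1$ of this section, so $\|A\|=1$ and $\kappa=1/\sigma_\ell$. Since $x_i$ has a harmless scaling, I would first check that \eqref{accralation} is scale invariant. Its middle term $\|r_i\|/(\|A\|\|x_i\|)$ does not change if $x_i$ is replaced by a nonzero multiple, and neither does the angle. So it applies equally to the normalized iterate $\hat x_i$ that Algorithm~\ref{alg2} actually stores.

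\textbf{Step 1: the returned iterate is nonzero.} Under the hypotheses of Theorem~\ref{thm1}, $y\notin\mathcal{R}_\ell$ and $\Delta_i\bm{\epsilon}^2<1$. Then \eqref{orthdecompose} gives $\|x_i\|\ge(1-\Delta_i\bm{\epsilon}^2)\|x_\epsilon\|>0$. Hence $\tilde x=x_i\neq\bm{0}$, the angle $\angle(\mathcal{N}_\epsilon,\tilde x)$ is well defined, and no division by zero occurs.

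\textbf{Step 2: the lower bound in \eqref{accralation}.} I would rederive this bound briefly for completeness. Write $x_i=W_\ell f_\ell+W_\epsilon f_\epsilon$. Then
\[
\|r_i\|=\|Ax_i\|\ge\|M_\ell\Sigma_\ell f_\ell\|\ge\sigma_\ell\|f_\ell\|,
\]
using $r_i=Ax_i$ from \eqref{residual}, the orthogonality of $M_\ell$ and $M_\epsilon$, and $\Sigma_\ell\succeq\sigma_\ell I$. Dividing by $\|A\|\|x_i\|=\sigma_1\|x_i\|$ gives
\[
\frac{\|r_i\|}{\|A\|\|x_i\|}\ge\frac{1}{\kappa}\,\sin\angle(\mathcal{N}_\epsilon,x_i).
\]

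\textbf{Step 3: apply the stopping rule.} Termination means that \eqref{convergence} holds at the returned index, that is, $\|r_i\|\le\|A\|\|x_i\|\varepsilon$. Combined with Step 2 this gives $\sin\angle(\mathcal{N}_\epsilon,\tilde x)\le\kappa\varepsilon$, which is \eqref{accxke}.

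\textbf{Main obstacle.} The only delicate point is the relation between the exact criterion \eqref{convergence} and the implemented test \eqref{conLSQR3}, which uses the estimate $\tilde\sigma_1$ in place of $\|A\|$. Dividing \eqref{ri} by $\|x_i\|=|\varphi_i|$ shows that \eqref{conLSQR3} is exactly \eqref{convergence} when $\tilde\sigma_1=\|A\|$. The theorem is stated with the criterion \eqref{convergence}, so I will take $\tilde\sigma_1=\|A\|$. If $\tilde\sigma_1$ is only an estimate, the bound becomes $\kappa\varepsilon\,\tilde\sigma_1/\|A\|$.

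The lucky breakdown $\alpha_{i+1}=0$ needs a separate remark. In that case $r_i=\bm{0}$, so Step 2 forces $f_\ell=\bm{0}$ and the bound holds trivially.
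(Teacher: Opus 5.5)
Your proof is correct and follows the paper's argument: the paper states Theorem~\ref{thm2} as a direct consequence of the lower bound $\frac{1}{\kappa}\sin\angle(\mathcal{N}_{\epsilon},x_i)\leq \|r_i\|/(\|A\|\|x_i\|)$ in \eqref{accralation}, combined with the stopping criterion \eqref{convergence}. Your extra points are valid details the paper leaves implicit: the hypotheses of Theorem~\ref{thm1} serve only to give $x_i\neq\bm{0}$ via \eqref{orthdecompose}, and the implemented test \eqref{conLSQR3} yields the bound $\kappa\varepsilon\,\tilde\sigma_1/\|A\|$ when $\tilde\sigma_1\neq\|A\|$.
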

 
Given that $\kappa\ll\varepsilon^{-1}$ according to \eqref{sig2}, 
Theorem~\ref{thm2} indicates that the converged $\tilde x$ serves 
as a valid approximation to a specific numerical null vector of $A$ 
in $\mathcal{N}_{\epsilon}$. A better-conditioned matrix $A$ 
generally yields a more accurate $\tilde x$. 
This validates the rationale behind the stopping criterion 
\eqref{convergence}.  
 
On the other hand, once $\sin\angle(\mathcal{N}{\epsilon},x_i)$ 
decreases to $\mathcal{O}(\bm{\epsilon})$, the polynomial 
$\psi_i\in\Psi_i$ in \eqref{xii} begins to minimize its magnitude 
not only at the dominant eigenvalues $\sigma_1^2$, $\dots,\sigma_{\ell}^2$ 
of $A^TA$, but also at the significantly smaller ones, such as 
$\sigma_{\ell+1}^2, \sigma_{\ell+2}^2, \dots, \sigma_N^2$. 
Consequently, both the slope $|\phi_i(0)|$ and, by \eqref{deltai2}, 
the scalar $\Delta_i$ undergo a substantial increase, growing from 
$\mathcal{O}(\kappa^2)$ to 
$\mathcal{O}\left(\frac{\sigma_1^2}{\sigma_{\ell+1}^2}\right) = 
\mathcal{O}\left(\frac{1}{\bm{\epsilon}^2}\right)$, and progressively 
to $\mathcal{O}\left(\frac{\sigma_1^2}{\sigma_{\ell+2}^2}\right), 
\dots,\mathcal{O}\left(\frac{\sigma_1^2}{\sigma_{N}^2}\right)$.  
As a result, bound \eqref{conv} deteriorates, and 
$\sin\angle(\mathcal{N}_{\epsilon},x_i)$ may experience a 
transient increase to $\mathcal{O}(1)$ over subsequent iterations. 
Nevertheless, by introducing the refined tolerance sequence 
$\bm{\epsilon}_k = \frac{\sigma_{\ell+k}}{\sigma_1}$ for 
$k \geq 2$, e.g., $\frac{\sigma_{\ell+2}}{\sigma_1}, 
\frac{\sigma_{\ell+3}}{\sigma_1}, \dots$, we establish the 
subspace inclusion $\mathcal{N}_{\bm{\epsilon}_k} 
\subseteq \mathcal{N}_{\bm{\epsilon}}$, which inherently yields 
$$\sin\angle(\mathcal{N}_{\epsilon},x_i) 
\leq \sin\angle(\mathcal{N}_{\bm{\epsilon}_k},x_i).$$ 
According to Theorem~\ref{thm1},
 $\sin\angle(\mathcal{N}_{\bm{\epsilon}_k},x_i)$ 
will continue to decrease to $\mathcal{O}(\bm{\epsilon}_k)$, 
albeit at a slower convergence rate. 
This implies that after initially reaching $\mathcal{O}(\bm{\epsilon})$,   
$\sin\angle(\mathcal{N}_{\epsilon},x_i)$ may oscillate several times, 
with each oscillation capturing successively smaller singular components, 
ultimately yielding a progressively finer level of accuracy.
 
\begin{figure}[tbhp]
	\centering
	\includegraphics[width=0.85\textwidth]{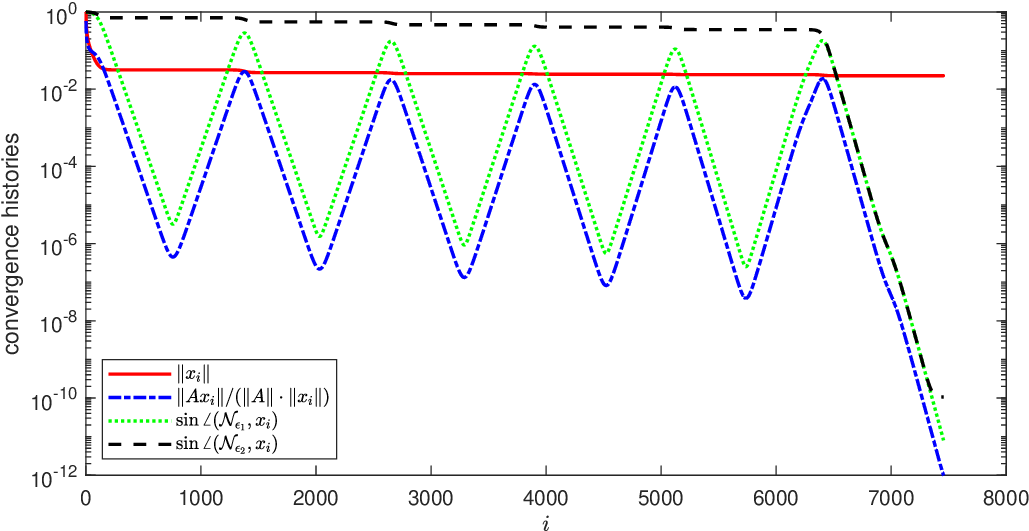}
	\caption{Convergence curve of LSQRNV.}\label{fig0}
\end{figure}
 
\begin{exper}
We applied LSQRNV to a $10000\times 10000$ diagonal matrix $A$ 
using the starting vector $y=\frac{1}{100}[1,\dots,1]^T$. 
The singular values of $A$ are uniformly distributed across 
three distinct clusters: $5$ in $[10^{-14},10^{-13}]$, $5$ in 
$[10^{-8},10^{-7}]$, and the remaining $9990$ in $[10^{-2},1]$. 
We employed two stopping tolerances in \eqref{convergence}, 
$\varepsilon_1=10^{-6}$ and $\varepsilon_2=10^{-12}$, which 
correspond to the thresholds $\bm{\epsilon}_1=10^{-7}$ and 
$\bm{\epsilon}_2=10^{-13}$ in \eqref{sig2}, and the numerical 
condition numbers $\kappa_1=10^2$ and $\kappa_2=10^{8}$, respectively. 
Figure~\ref{fig0} illustrates the iteration histories of the 
norm $\|x_i\|$, the relative residual $\|Ax_i\|/(\|A\|\|x_i\|)$, 
and the accuracy metrics $\sin\angle(\mathcal{N}_{\epsilon_1},x_i)$ 
and $\sin\angle(\mathcal{N}_{\epsilon_2},x_i)$.	    
\end{exper}

As shown in the figure, for $\varepsilon_1 = 10^{-6}$, 
$\sin\angle(\mathcal{N}_{\epsilon_1},x_i)$ initially converged 
rapidly, reaching $9.35\times 10^{-6}$ at iteration $i_1=684$ 
when the stopping criterion \eqref{convergence} was satisfied. 
Conversely, for the stricter tolerance $\varepsilon_2 = 10^{-12}$, 
$\sin\angle(\mathcal{N}_{\epsilon_2},x_i)$ decreased much more 
slowly due to the considerably larger $\kappa_2$, reaching 
$1.06\times 10^{-10}$ at iteration $i_2=7454$ upon meeting 
\eqref{convergence}. 
Additionally, the iterations between $i_1$ and $i_2$ clearly 
illustrate the oscillatory phenomenon analyzed previously. 
These observations confirm the results established in 
Theorems~\ref{thm1} and \ref{thm2}.  
 
\vspace{0.5em}  
   
Theorems~\ref{thm1} and~\ref{thm2} require the fundamental 
assumption that $y\notin \mathcal{R}_{\ell}$. 
We now investigate the algorithm's behavior when $y$ is 
nearly contained in $\mathcal{R}_{\ell}$. 
 
\begin{remark}\label{remark3-4}
Suppose that $\sin\angle(\mathcal{R}_{\ell},y) 
\leq \mathcal{O}(\varepsilon)$. 
By \eqref{yxz}--\eqref{g1g2}, the orthogonal projection 
$x_{\epsilon}$ of $y$ onto $\mathcal{N}_{\epsilon}$ satisfies 
$\|x_{\epsilon}\| = \sin\angle(\mathcal{R}_{\ell},y)
\leq  \mathcal{O}(\varepsilon)$. 
In this case, combining \eqref{xii} with \eqref{upperbound3} 
implies that the $i$th iterate $x_i$ generated by LSQRNV satisfies   
	\begin{equation}\label{xibound} 
		\|x_i\| \leq \|\widehat\psi_i(\Sigma^2_{\ell})g_{\ell}\| 
		+ \left(1+ \Delta_i\bm{\epsilon}^2 \right) \|x_{\epsilon}\|. 
	\end{equation}
	Consequently, we deduce from \eqref{sin} and \eqref{boundkey} that
	\begin{align*} 
		\sin\angle(\mathcal{N}_{\epsilon},x_i) &
		=  \frac{\|\psi_i(\Sigma^2_{\ell})g_{\ell}\|}{\|x_i\|} 
		\geq \frac{\|\widehat\psi_i(\Sigma^2_{\ell})g_{\ell}\| - 
			2 \sqrt{\Delta_i}\bm{\epsilon} \|x_{\epsilon}\|}
		{\|\widehat\psi_i(\Sigma^2_{\ell})g_{\ell}\| + 
			\left(1+ \Delta_i\bm{\epsilon}^2 \right) \|x_{\epsilon}\|} \nonumber \\
		& = \frac{1-2 \sqrt{\Delta_i}\bm{\epsilon} 
			\cdot \frac{\|x_{\epsilon}\|}
			{\|\widehat\psi_i(\Sigma^2_{\ell})g_{\ell}\|}}
		{1 + \left(1+ \Delta_i\bm{\epsilon}^2 \right)\cdot
		\frac{\|x_{\epsilon}\|}
		{\|\widehat\psi_i(\Sigma^2_{\ell})g_{\ell}\|}} 
	\geq \frac{1 - 2 \sqrt{\Delta_i}\bm{\epsilon} }
		{2+\Delta_i\bm{\epsilon}^2 } 
		=\frac{1}{2}+\mathcal{O}(\bm{\epsilon}), 
	\end{align*} 
provided that $\|\widehat\psi_i(\Sigma^2_{\ell})g_{\ell}\|
\geq\|x_{\epsilon}\|$. 
In this scenario, given the stopping tolerance 
$\varepsilon\ll\kappa^{-1}$ from \eqref{sig2}, Theorem~\ref{thm2} 
guarantees that LSQRNV will not meet the convergence 
criterion \eqref{convergence}. 
However, once the monotonically decreasing term 
$\|\widehat\psi_i(\Sigma^2_{\ell}) g_{\ell}\|$ 
drops to $\|x_{\epsilon}\|$, \eqref{xibound} ensures that 
\begin{equation*}   
\|x_i\| 
\leq  \left(2+ \Delta_i\bm{\epsilon}^2 \right) \|x_{\epsilon}\| =
 \mathcal{O}(\varepsilon). 	
\end{equation*} 
In other words, if the starting vector $y$ for LSQRNV lies 
predominantly within $\mathcal{R}_{\ell}$, the monotonically 
decreasing norm $\|x_i\|$ will fall below $\mathcal{O}(\varepsilon)$ 
well before the convergence criterion \eqref{convergence} is satisfied. 
We will leverage this key property to design a termination criterion 
when employing LSQRNV to compute an orthonormal basis for 
$\mathcal{N}_{\epsilon}$; see Section~\ref{subsec:3-2}.
\end{remark} 
 
\section{The LSQRNS algorithm}\label{sec:3} 
This section is devoted to a practical LSQR-based null space 
algorithm for computing an orthonormal basis 
$\{x_1^*,\dots,x_n^*\}$ for the numerical null space 
$\mathcal{N}_{\epsilon}$ of $A$ with the dimension 
$n=\dim(\mathcal{N}_{\epsilon})\geq1$. 

\subsection{Deflation}\label{subsec:3-1}
Suppose we have already obtained $m-1$ reasonably accurate, 
orthonormal approximate numerical null vectors 
$x_{1}^{\mathrm{c}},\dots,x_{m-1}^{\mathrm{c}}$ of $A$, 
where $m\leq n$, and we aim to compute the next one.  
Define   
\begin{equation}\label{defXm}
	X_{m-1}=[x_{1}^{\mathrm{c}},\cdots,x_{m-1}^{\mathrm{c}}]
	\quad\mbox{and}\quad
	\mathcal{X}_{m-1}=\spans\{X_{m-1}\}. 
\end{equation}
It is straightforward to verify that the orthogonal projection 
of any $y\perp\mathcal{X}_{m-1}$ onto $\mathcal{N}_{\epsilon}$ 
is nearly orthogonal to $\mathcal{X}_{m-1}$. 
Therefore, we can apply the LSQRNV algorithm proposed in 
section~\ref{sec:2} with a normalized starting vector 
$y_m\perp\mathcal{X}_{m-1}$ to capture this projection, 
thereby obtaining the next approximate numerical null vector 
$x_m^{\mathrm{c}}\perp\mathcal{X}_{m-1}$. 
This technique is widely known as deflation in the literature 
and is commonly used in numerical linear algebra and 
scientific computing. 

Specifically, at the $m$-th step, we proceed as follows:  
(\romannumeral 1) Choose a normalized vector $y_m\perp X_{m-1}$ 
by setting
\begin{equation}\label{defy}
	y_m=\bar y_m/\|\bar y_m\| 
	\quad\mbox{with}\quad
	\bar y_m = \left(I-X_{m-1}X_{m-1}^{T}\right)\hat y_m,
\end{equation} 
where $\hat y_m \in \mathbb{R}^N$ is a random vector with mean 
$\EE(\hat y_m) = \bm{0}$ and covariance matrix 
$\cov(\hat y_m) = \gamma I$ for some $\gamma>0$.
(\romannumeral 2) Run Algorithm~\ref{alg2} with the initial 
vector $y=y_m$ until convergence to produce a  normalized 
approximate numerical null vector of $A$, denoted by $\tilde x_{m}$. 
(\romannumeral 3) Orthonormalize $\tilde x_{m}$ against $X_{m-1}$ 
to obtain the next 
\begin{equation}\label{xm1}
	x_{m}^{\mathrm{c}} = \frac{(I-X_{m-1}X_{m-1}^T)\tilde x_{m}}
	{\left\|(I-X_{m-1}X_{m-1}^T)\tilde x_{m}\right\|},  
\end{equation}
and expand the basis matrix as  $X_{m}=[X_{m-1},x_{m}^{\mathrm{c}}]$. 
We repeat this procedure for $m=1,\dots,n$ until an $n$-dimensional 
approximate orthonormal basis $X_n$ for the numerical null space 
$\mathcal{N}_{\epsilon}$ of $A$ is established. 
The resulting method is referred to as the LSQR-based null space 
algorithm, abbreviated as LSQRNS.
 
Theorems~\ref{thm1} and \ref{thm2} demonstrate that the success 
of the LSQRNS algorithm relies on choosing suitable starting 
vectors $y_m$ that capture sufficient information about 
$\mathcal{N}_{\epsilon}$. The validity of this selection strategy 
is established by the following theorem.
  
\begin{theorem}\label{thm4}
	Suppose that $x_{1}^{\mathrm{c}},\dots, 
	x_{m-1}^{\mathrm{c}}\in\mathcal{N}_{\epsilon}$.
	Then, the projected vector $\bar y_m$ defined in \eqref{defy},    
	satisfies 
	\begin{equation}\label{exi}
		\qquad\qquad\quad	
		\frac{\EE(\|\bar y_m\|^2\sin^2\angle(\mathcal{N}_{\epsilon},\bar y_m))}
		{\EE(\|\bar y_m\|^2\cos^2\angle(\mathcal{N}_{\epsilon},\bar y_m))}
		=\frac{N-n}{n-m+1},
		\qquad 
		m=1,\cdots,n.
	\end{equation} 
\end{theorem}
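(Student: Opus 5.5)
We rewrite both numerator and denominator as expectations of quadratic forms in $\hat y_m$, and then use the identity $\EE(\hat y^T K\hat y)=\mathrm{tr}(K\,\cov(\hat y))+\EE(\hat y)^TK\EE(\hat y)=\gamma\,\mathrm{tr}(K)$, valid for any fixed symmetric $K$ because $\EE(\hat y_m)=\bm{0}$ and $\cov(\hat y_m)=\gamma I$.

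First, by the orthogonal decomposition \eqref{yxz} applied to $\bar y_m$ (without normalization), we have
\[
\|\bar y_m\|^2\sin^2\angle(\mathcal{N}_{\epsilon},\bar y_m)=\|W_{\ell}^T\bar y_m\|^2
\quad\mbox{and}\quad
\|\bar y_m\|^2\cos^2\angle(\mathcal{N}_{\epsilon},\bar y_m)=\|W_{\epsilon}^T\bar y_m\|^2,
\]
as in \eqref{g1g2}. Writing $P=I-X_{m-1}X_{m-1}^T$ (an orthogonal projector, since the $x_j^{\mathrm c}$ are orthonormal), the numerator becomes $\EE(\hat y_m^TPW_{\ell}W_{\ell}^TP\hat y_m)=\gamma\,\mathrm{tr}(W_{\ell}^TPW_{\ell})$ and the denominator becomes $\gamma\,\mathrm{tr}(W_{\epsilon}^TPW_{\epsilon})$, using $P^2=P$ and the cyclicity of the trace.

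Next we use the hypothesis $x_j^{\mathrm c}\in\mathcal{N}_{\epsilon}=\spans\{W_{\epsilon}\}$. It gives $W_{\ell}^TX_{m-1}=\bm{0}$, so $W_{\ell}^TPW_{\ell}=W_{\ell}^TW_{\ell}=I_{\ell}$, whose trace is $\ell=N-n$. For the denominator, $\mathrm{tr}(W_{\epsilon}^TPW_{\epsilon})=n-\mathrm{tr}(X_{m-1}^TW_{\epsilon}W_{\epsilon}^TX_{m-1})$. Since each $x_j^{\mathrm c}$ lies in $\mathcal{N}_{\epsilon}$, we have $W_{\epsilon}W_{\epsilon}^TX_{m-1}=X_{m-1}$, so this trace equals $\mathrm{tr}(X_{m-1}^TX_{m-1})=m-1$. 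The denominator is therefore $\gamma(n-m+1)$, which is positive for $m\le n$, and dividing gives \eqref{exi}.

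No step is a real obstacle. The only points that need care are stating the quadratic-form expectation identity with the zero-mean assumption, and noting that the ratio is of expectations rather than the expectation of a ratio, so no distributional assumption beyond the first two moments is required.
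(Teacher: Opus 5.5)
Your proof is correct and is essentially the paper's argument. Both write $\|\bar y_m\|^2\sin^2\angle(\mathcal{N}_{\epsilon},\bar y_m)$ and $\|\bar y_m\|^2\cos^2\angle(\mathcal{N}_{\epsilon},\bar y_m)$ as squared norms of orthonormal projections of $\hat y_m$, and then apply the second-moment identity $\EE(\hat y_m^TK\hat y_m)=\gamma\,\mathrm{tr}(K)$. The only difference is bookkeeping: the paper completes $X_{m-1}$ to an orthonormal basis $[X_{m-1},X_{\perp}]$ of $\mathcal{N}_{\epsilon}$ to get $\bar y_m=W_\ell W_\ell^T\hat y_m+X_\perp X_\perp^T\hat y_m$, whereas you evaluate $\mathrm{tr}(W_{\epsilon}^TPW_{\epsilon})=n-(m-1)$ directly, which avoids constructing $X_{\perp}$.
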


\begin{proof}
Let $X_{\perp}\in\mathbb{R}^{N\times(n-m+1)}$ be such that the 
columns of $[X_{m-1},X_{\perp}]$ form  an orthonormal basis 
for $\mathcal{N}_{\epsilon}$. 
By virtue of \eqref{svdA2}, the augmented matrix 
$[W_{\ell}, X_{m-1},X_{\perp}]$ is orthogonal. 
Combined with \eqref{defy}, this implies   
\begin{equation*}
	\bar y_m=W_\ell W_\ell ^T\hat y_m + X_{\perp}X_{\perp}^T\hat y_m,
\end{equation*}
where $X_{\perp}X_{\perp}^T\hat y_m$ is the 
orthogonal projection of $\bar y_m$ onto $\mathcal{N}_{\epsilon}$. 
It then follows that 
	\begin{equation*}
		\|\bar y_m\|\sin\angle\left(\mathcal{N}_{\epsilon},\bar y_m\right) 
		= \|W_\ell^{T}\hat y_m\|
		\quad\mbox{and}\quad
		\|\bar y_m\|\cos\angle\left(\mathcal{N}_{\epsilon},\bar y_m\right)
		= \|X_{\perp}^T\hat y_m\|.
	\end{equation*}
Given that $\mathbb{E}(\hat y_m)=\bm{0}$ 
and $\mathrm{cov}(\hat y_m)=\gamma I$, 
a straightforward calculation reveals 
\begin{align*}
	&\EE\left(\|\bar y_m\|^2 
	\sin^2\angle(\mathcal{N}_{\epsilon},\bar y_m)\right)
	= \gamma\cdot\|W_\ell\|_F^2 = \gamma \cdot (N-n),  \\
	&\EE\left(\|\bar y_m\|^2
	\cos^2\angle(\mathcal{N}_{\epsilon},\bar y_m)\right)
	= \gamma\cdot\|X_{\perp}\|_F^2 = \gamma \cdot (n-m+1), 
\end{align*} 
	where $\|\cdot\|_F$ denotes the Frobenius norm. 
	Dividing these two equations yields \eqref{exi}. 
\end{proof} 
 
Suppose that $y_m$ in \eqref{defy} admits an orthogonal decomposition 
analogous to \eqref{yxz}. By standard continuity arguments, 
its orthogonal projection $x_{m}^{*}$ onto $\mathcal{N}_{\epsilon}$ 
is numerically orthogonal to the previously converged subspace 
$\mathcal{X}_{m-1}$. 
Moreover, since $y_m=\bar y_m/\|\bar y_m\|$,
Theorem~\ref{thm4} demonstrates that in expectation, 
\begin{equation}\label{xm}
	\qquad\qquad   \tan\angle(\mathcal{N}_{\epsilon},y_m)\approx
	\frac{\sqrt{N-n}}{\sqrt{n-m+1}},\qquad m\leq n.
\end{equation} 
Consequently, provided $m\leq n$, the factor  
$\tan\angle(\mathcal{N}_{\epsilon},y_m)$ in Theorem~\ref{thm1} 
remains moderate in size. This guarantees that the LSQRNV 
algorithm will converges within a reasonable total number 
of iterations during the $m$-th outer cycle of the LSQRNS algorithm. 
Additionally, Theorem~\ref{thm2} ensures that the output 
$\tilde x_m$ serves as a valid approximate numerical null 
vector of $A$.

\subsection{Accuracy of the approximate numerical null space}  
This section is devoted to deriving \textit{a priori} accuracy 
bounds for the computed subspaces $\mathcal{X}_{m}$ defined in 
\eqref{defXm} for $m=1,\dots,n$.  
To this end, we first establish the following result regarding  
the accuracy of the $m$-th approximate numerical null vector 
$x_{m}^{\mathrm{c}}$ of $A$ given in \eqref{xm1} for $m=2,\dots,n$. 
 
\begin{lemma}\label{lemma1}
Denote by $\angle(\mathcal{N}_{\epsilon},\mathcal{X}_{m})$ the 
canonical angles between $\mathcal{N}_{\epsilon}$ and 
$\mathcal{X}_{m}$,  $m=1,\dots,n$.   
For $m=2,\dots,n$, the vector $x_{m}^{\mathrm{c}}$ defined in 
\eqref{xm1} satisfies  
\begin{equation}\label{accxm} 
	\sin\angle(\mathcal{N}_{\epsilon},x_{m}^{\mathrm{c}}) 
	\leq \frac{\sin\angle(\mathcal{N}_{\epsilon},\tilde x_{m})
		+ \|\sin\angle(\mathcal{N}_{\epsilon},\mathcal{X}_{m-1})\|
		\cos\angle(\mathcal{X}_{m-1},\tilde x_{m})}
	{\sqrt{1-\cos^2\angle(\mathcal{X}_{m-1},\tilde x_{m})}}. 
\end{equation} 
Furthermore, let $\Delta_{i_m}$ be defined as in \eqref{deltai}, 
where $i_m$ denotes the total number of iterations required for 
LSQRNV to converge during the $m$-th outer cycle of LSQRNS. 
If  $\Delta_{i_m}\bm{\epsilon}^2<1$, then  the cosine term in 
\eqref{accxm}  admits the following bound: 
\begin{equation}\label{cos2}
	\cos\angle(\mathcal{X}_{m-1},\tilde x_{m})
	\leq \|\sin\angle(\mathcal{N}_{\epsilon},\mathcal{X}_{m-1})\|
	\cdot  \frac{\tan\angle(\mathcal{N}_{\epsilon},y_m)}
	{1-\Delta_{i_m}\bm{\epsilon}^2} +\frac{2\sqrt{\Delta_{i_m}}   
		\bm{\epsilon}}{1-\Delta_{i_m}\bm{\epsilon}^2}. 
\end{equation} 
\end{lemma}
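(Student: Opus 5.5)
For \eqref{accxm}, I will work directly with the Gram--Schmidt step \eqref{xm1}. Let $P=X_{m-1}X_{m-1}^T$ and take $\tilde x_m$ normalized. Then $\|(I-P)\tilde x_m\|^2=1-\|X_{m-1}^T\tilde x_m\|^2=1-\cos^2\angle(\mathcal{X}_{m-1},\tilde x_m)$, which produces the denominator. For the numerator, I will bound $\|W_\ell^T(I-P)\tilde x_m\|$, since $\sin\angle(\mathcal{N}_\epsilon,x_m^{\mathrm c})=\|W_\ell^Tx_m^{\mathrm c}\|$. By the triangle inequality,
\[
\|W_\ell^T(I-P)\tilde x_m\|\le \|W_\ell^T\tilde x_m\|+\|W_\ell^TX_{m-1}\|\,\|X_{m-1}^T\tilde x_m\|.
\]
The first term equals $\sin\angle(\mathcal{N}_\epsilon,\tilde x_m)$. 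In the second term, $\|X_{m-1}^T\tilde x_m\|=\cos\angle(\mathcal{X}_{m-1},\tilde x_m)$, and $\|W_\ell^TX_{m-1}\|$ is the largest sine of the canonical angles between $\mathcal{X}_{m-1}$ and $\mathcal{N}_\epsilon$. This is $\|\sin\angle(\mathcal{N}_\epsilon,\mathcal{X}_{m-1})\|$ in the spectral norm; here I use that $X_{m-1}$ has orthonormal columns and $\dim\mathcal{X}_{m-1}\le n$. Dividing gives \eqref{accxm}.

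For \eqref{cos2}, I will reuse the machinery from the proof of Theorem~\ref{thm1} with $y=y_m$ and $i=i_m$. Write $\tilde x_m=x_{i_m}/\|x_{i_m}\|$. From \eqref{varphiATAy},
\[
x_{i_m}=W_\ell\psi_{i_m}(\Sigma_\ell^2)g_\ell+x_\epsilon+W_\epsilon\Sigma_\epsilon^2\phi_{i_m}(\Sigma_\epsilon^2)g_\epsilon .
\]
Applying $X_{m-1}^T$ splits this into three contributions.

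\textbf{Middle term.} Since $y_m\perp\mathcal{X}_{m-1}$, we have $X_{m-1}^Tx_\epsilon=X_{m-1}^T(y_m-W_\ell g_\ell)=-X_{m-1}^TW_\ell g_\ell$. Combined with the first term, the $W_\ell$-part gives $X_{m-1}^TW_\ell(\psi_{i_m}(\Sigma_\ell^2)-I)g_\ell$.

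\textbf{First two terms together.} Rather than bounding them separately, the cleaner route is to rewrite them via $\psi_{i_m}(\Sigma_\ell^2)-I=\Sigma_\ell^2\phi_{i_m}(\Sigma_\ell^2)$ and bound the result by $\|W_\ell^TX_{m-1}\|$ times the norm of the $g_\ell$-part. I then need a bound on that $g_\ell$-part of order $\|g_\ell\|$. The intended factor in \eqref{cos2} is $\tan\angle(\mathcal{N}_\epsilon,y_m)=\|g_\ell\|/\|x_\epsilon\|$, which comes out directly after dividing by $\|x_{i_m}\|\ge(1-\Delta_{i_m}\bm\epsilon^2)\|x_\epsilon\|$ from \eqref{orthdecompose}. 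So I aim to show the $W_\ell$-component contributes at most $\|\sin\angle(\mathcal{N}_\epsilon,\mathcal{X}_{m-1})\|\,\|g_\ell\|$.

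\textbf{Main obstacle.} Obtaining that bound is the step I expect to be hardest. The naive estimate $\|\psi_{i_m}(\Sigma_\ell^2)-I\|\le 2$ gives a factor $2$ that does not appear in \eqref{cos2}. The fallback is to exploit that $\psi_{i_m}$ is the minimizer: it is bounded by $\widehat\psi_{i_m}$ on the $\ell$-block via \eqref{boundkey}, and the orthogonality structure lets the $X_{m-1}$-component be written as $-X_{m-1}^TW_\ell g_\ell+X_{m-1}^TW_\ell\psi_{i_m}(\Sigma_\ell^2)g_\ell$. Since $x_{i_m}$ is the LSQR residual, it is orthogonal to $A^T\mathcal{U}_{i_m}$; I will try to use this to discard one of the two pieces. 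If the constant still does not come out as $1$, I would accept a modest constant.

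\textbf{Third term.} The remaining term is bounded by Lemma~\ref{lemma3}: $\|X_{m-1}^TW_\epsilon\Sigma_\epsilon^2\phi(\Sigma_\epsilon^2)g_\epsilon\|\le\Delta_{i_m}\bm\epsilon^2\|x_\epsilon\|$. Together with a $\psi_{i_m}$-versus-$\widehat\psi_{i_m}$ correction of size $2\sqrt{\Delta_{i_m}}\bm\epsilon\|x_\epsilon\|$ from \eqref{boundkey}, and after dividing by \eqref{orthdecompose}, this yields the additive term $2\sqrt{\Delta_{i_m}}\bm\epsilon/(1-\Delta_{i_m}\bm\epsilon^2)$, using $\Delta_{i_m}\bm\epsilon^2\le\sqrt{\Delta_{i_m}}\bm\epsilon<1$.
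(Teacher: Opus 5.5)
Your proof of \eqref{accxm} is the paper's argument. Your reduction for \eqref{cos2} also matches the paper's: using $y_m\perp\mathcal{X}_{m-1}$ you obtain $X_{m-1}^Tx_{i_m}=X_{m-1}^TW_\ell\tilde g_\ell+X_{m-1}^TW_\epsilon\tilde g_\epsilon$, with $\tilde g_\ell=\Sigma_\ell^2\phi_{i_m}(\Sigma_\ell^2)g_\ell$ and $\tilde g_\epsilon=\Sigma_\epsilon^2\phi_{i_m}(\Sigma_\epsilon^2)g_\epsilon$, exactly as in \eqref{xim}.

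The gap is the step you flag as the main obstacle. Write $\eta=\sqrt{\Delta_{i_m}}\bm{\epsilon}$. Then \eqref{cos2} needs $\|\tilde g_\ell\|\le\|g_\ell\|+\mathcal{O}(\eta)\|x_\epsilon\|$ with constant $1$ in front of $\|g_\ell\|$, and you do not establish it. Neither of your two routes delivers it:
\begin{itemize}
\item The orthogonality $x_{i_m}\perp A^T\mathcal{U}_{i_m}$ says nothing specific about the $\mathcal{X}_{m-1}$-components of $x_{i_m}$. So it cannot make either $X_{m-1}^TW_\ell g_\ell$ or $X_{m-1}^TW_\ell\psi_{i_m}(\Sigma_\ell^2)g_\ell$ disappear.
\item The fallback through \eqref{boundkey} and $\|\widehat\psi_{i_m}(\Sigma_\ell^2)g_\ell\|\le\|g_\ell\|$ only gives $\|\tilde g_\ell\|\le 2\|g_\ell\|+2\eta\|x_\epsilon\|$. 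That puts a factor $2$ on the $\tan\angle(\mathcal{N}_\epsilon,y_m)$ term and yields an additive term $(2\eta+\eta^2)/(1-\eta^2)$. This is a strictly weaker inequality than \eqref{cos2}, so ``accepting a modest constant'' does not prove the lemma.
\end{itemize}
Separately, the operator bound $\|\psi_{i_m}(\Sigma_\ell^2)-I\|\le 2$ is not available at all. The minimizer $\psi_{i_m}$ can be large at a $\sigma_j^2$ that carries little weight in $g_\ell$.

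The missing idea is to use the LSQR orthogonality as a Pythagorean identity, not componentwise. Since $x_{i_m}\perp A^Tz_{i_m}=y_m-x_{i_m}=-(W_\ell\tilde g_\ell+W_\epsilon\tilde g_\epsilon)$, we have
\[
\|\tilde g_\ell\|^2+\|\tilde g_\epsilon\|^2=\|y_m\|^2-\|x_{i_m}\|^2=\|g_\ell\|^2+\|x_\epsilon\|^2-\|x_{i_m}\|^2\le\|g_\ell\|^2+(2\eta^2-\eta^4)\|x_\epsilon\|^2,
\]
where the last step uses \eqref{orthdecompose}. Hence $\|\tilde g_\ell\|\le\|g_\ell\|+\sqrt{2}\,\eta\|x_\epsilon\|$. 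This is the paper's route. (The paper writes $\Delta_{i_m}\bm{\epsilon}^2$ in place of $2\eta^2-\eta^4$ at this point, which is a slip, since $1-(1-a)^2=2a-a^2$.)

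To finish:
\begin{itemize}
\item Keep $\|X_{m-1}^TW_\ell\|$ in front of $\|g_\ell\|$, and bound it and $\|X_{m-1}^TW_\epsilon\|$ by $1$ everywhere else.
\item Use $\|\tilde g_\epsilon\|\le\eta^2\|x_\epsilon\|$ from Lemma~\ref{lemma3}.
\item Divide by $\|x_{i_m}\|\ge(1-\eta^2)\|x_\epsilon\|$.
\end{itemize}
This gives \eqref{cos2} with additive term $(\sqrt{2}\eta+\eta^2)/(1-\eta^2)$. That term is at most $2\eta/(1-\eta^2)$ when $\eta\le 2-\sqrt{2}$. When $\eta\ge\sqrt{2}-1$, the right-hand side of \eqref{cos2} is already at least $1$, so the bound is trivial. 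Since $\sqrt{2}-1<2-\sqrt{2}$, these two cases cover every $\eta<1$, and \eqref{cos2} holds exactly as stated.
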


\begin{proof}
Recall that the columns of $W_\ell$ and $W_{\epsilon}$ in \eqref{svdA2} 
form orthonormal bases for the mutually orthogonal 
$\mathcal{R}_{\ell}$ and $\mathcal{N}_{\epsilon}$, respectively. 
Utilizing \eqref{xm1} and the definition of the sine and cosine 
of the canonical angles between two subspaces \cite{jiastewart2001}, 
we deduce  
	\begin{align*}
		\sin\angle(\mathcal{N}_{\epsilon},x_{m}^{\mathrm{c}})
		=&\frac{\|W_\ell^Tx_{m}^{\mathrm{c}}\|}{\|x_{m}^{\mathrm{c}}\|} = 
		\frac{\|W_\ell^T(I-X_{m-1}X_{m-1}^T)\tilde x_{m}\|/\|\tilde x_{m}\|}
		{\|(I-X_{m-1}X_{m-1}^T)\tilde x_{m}\|/\|\tilde x_{m}\|} \nonumber\\
		\leq&\frac{\|W_\ell^T\tilde x_{m}\|/\|\tilde x_{m}\|
			+\|W_\ell^TX_{m-1}\|\|X_{m-1}^T\tilde x_{m}\|/\|\tilde x_{m}\|}
		{\sin\angle(\mathcal{X}_{m-1},\tilde x_{m})}      \nonumber\\ 
		=&\frac{\sin\angle(\mathcal{N}_{\epsilon},\tilde x_{m}) 
			+\|\sin\angle(\mathcal{N}_{\epsilon},\mathcal{X}_{m-1})\|
			\cos\angle(\mathcal{X}_{m-1},\tilde x_{m})}
		{\sqrt{1-\cos^2\angle(\mathcal{X}_{m-1},\tilde x_{m})}}, 
	\end{align*}  
thus proving \eqref{accxm}. 
  
Setting $y=y_m$ in \eqref{yxz} and invoking \eqref{xii}, 
\eqref{varphistar}, and \eqref{svdA2}, we deduce
\begin{equation}\label{xim}
	x_{i_m} = (I+A^TA\phi_{i_m}(A^TA))y_m  
	 =y_m+W_{\ell}\tilde g_{\ell}+W_{\epsilon} \tilde g_{\epsilon},   
\end{equation}
where $\tilde g_{\ell}=\Sigma_{\ell}^2\phi_{i_m}(\Sigma_{\ell}^2)g_{\ell}$ 
and $\tilde g_{\epsilon}= \Sigma_{\epsilon}^2\phi_{i_m}
(\Sigma_{\epsilon}^2)g_{\epsilon}$. By Lemma~\ref{lemma3} 
and \eqref{g1g2}, $\tilde g_{\epsilon}$ satisfies  
\begin{equation}\label{tildeg}
	\|\tilde g_{\epsilon}\|\leq\|\Sigma^2_{\epsilon}\phi_{i_m}
	(\Sigma^2_{\epsilon})\|\|g_{\epsilon}\|
	\leq\Delta_{i_m}\bm{\epsilon}^2\|x_{\epsilon}\|.
\end{equation}
By the harmonic extraction property of LSQR applied to \eqref{leastsq}, 
the $i_m$-th residual $x_{i_m}=y_m-A^Tz_{i_m}$ is orthogonal to 
$A^T\mathcal U_{i_m}$ with $\mathcal U_{i_m}$ the current search 
subspace for $z_{i_m}$. Thus, 
$x_{i_m}\perp A^Tz_{i_m}=y_m-x_{i_m} 
=W_{\ell}\tilde g_{\ell}+W_{\epsilon} \tilde g_{\epsilon}$, 
where the latter two terms are mutually orthogonal. 
In other words, alongside \eqref{yxz}, \eqref{xim} implicitly 
provides another  orthogonal decomposition of $y_m$, yielding 
$$
\|x_{i_m}\|^2+\|\tilde g_{\ell}\|^2+\|\tilde g_{\epsilon}\|^2 
= \|y_m\|^2=\|x_{\epsilon}\|^2+\|y_{\ell}\|^2.
$$  
Subtracting $\|x_{i_m}\|^2$ from both sides and applying 
\eqref{orthdecompose} yields
$$
\|\tilde g_{\ell}\|^2\leq \|y_{\ell}\|^2 + \|x_{\epsilon}\|^2-\|x_{i_m}\|^2
\leq\|y_{\ell}\|^2+\Delta_{i_m}\bm{\epsilon}^2\|x_{\epsilon}\|^2.
$$ 
Consequently, utilizing $y_m\perp X_{m-1}$ from \eqref{defy}, 
and relations \eqref{xim}--\eqref{tildeg}, we obtain 
\begin{align*} 
	\cos\angle(\mathcal{X}_{m-1},x_{i_m})
	&=\frac{\|X_{m-1}^Tx_{i_m}\|}{\|x_{i_m}\|} 
	\leq \frac{\|X_{m-1}^TW_{\ell}\|\|\tilde g_{\ell}\|
		+\|X_{m-1}^TW_{\epsilon}\|
		\|\tilde g_{\epsilon}\| }{\|x_{i_m}\|} \nonumber \\
	&\leq \frac{\|X_{m-1}^TW_{\ell}\|(\|y_{\ell}\|
		+\sqrt{\Delta_{i_m}}\bm{\epsilon}\|x_{\epsilon}\|) 
		+ \Delta_{i_m}\bm{\epsilon}^2\|x_{\epsilon}\| }
	{(1-\Delta_{i_m}\bm{\epsilon}^2)\|x_{\epsilon}\|} \nonumber  \\ 
	&\leq 
	\frac{\|X_{m-1}^TW_{\ell}\|\|y_{\ell}\| }
	{(1- \Delta_{i_m}\bm{\epsilon}^2)\|x_{\epsilon}\|} 
	+ \frac{2\sqrt{\Delta_{i_m}}\bm{\epsilon}}
	{ 1-\Delta_{i_m}\bm{\epsilon}^2}, 
\end{align*}
where we have used $\|X_{m-1}^TW_{\ell}\|\leq1$,  
$\|X_{m-1}^TW_{\ell}\|\leq1$, and 
$\eta+\eta^2<2\eta$ for $\eta=\sqrt{\Delta_{i_m}}\bm{\epsilon}<1$.  
Finally, since $\tilde x_m=x_{i_m}/\|x_{i_m}\|$, 
substituting \eqref{g1g2} and 
$\|X_{m-1}^TW_{\ell}\|= \|\sin\angle(\mathcal{N}_{\epsilon}, 
\mathcal{X}_{m-1})\|$ into this relation establishes \eqref{cos2}. 
\end{proof}
 
Next, we establish the following accuracy relationship 
between $\mathcal{X}_m$ and $\mathcal{X}_{m-1}$.
 
\begin{lemma}\label{lemma2}
	With notations of Lemma~\ref{lemma1}, for $m=2,\cdots,n$, 
	it holds that 
	\begin{equation*}  
		\|\sin\angle(\mathcal{N}_{\epsilon},\mathcal{X}_{m})\|_F^2
		=  \|\sin\angle(\mathcal{N}_{\epsilon},\mathcal{X}_{m-1})\|_F^2
			+ \sin^2\angle(\mathcal{N}_{\epsilon},x_{m}^{\mathrm{c}}) .
	\end{equation*} 
\end{lemma}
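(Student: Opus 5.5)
The plan is to express the squared Frobenius norm of the sines of the canonical angles through the projection onto the orthogonal complement $\mathcal{R}_{\ell}$ of $\mathcal{N}_{\epsilon}$. For any subspace $\mathcal{X}$ with orthonormal basis matrix $X$ whose dimension does not exceed $n=\dim(\mathcal{N}_{\epsilon})$, the singular values of $W_\ell^TX$ are exactly the sines of the canonical angles between $\mathcal{X}$ and $\mathcal{N}_{\epsilon}$. This holds because $[W_\ell,W_\epsilon]$ is orthogonal, so $X^TW_\epsilon W_\epsilon^TX+X^TW_\ell W_\ell^TX=I$. Consequently
\begin{equation*}
\|\sin\angle(\mathcal{N}_{\epsilon},\mathcal{X})\|_F^2=\|W_\ell^TX\|_F^2 .
\end{equation*}
This is the same identification already used in the proof of Lemma~\ref{lemma1}, where $\|X_{m-1}^TW_\ell\|=\|\sin\angle(\mathcal{N}_{\epsilon},\mathcal{X}_{m-1})\|$. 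Here I would only need its Frobenius-norm version, which follows from the same singular-value argument.

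Next I would use the block structure $X_m=[X_{m-1},x_m^{\mathrm{c}}]$. By \eqref{xm1}, $x_m^{\mathrm{c}}$ is a unit vector orthogonal to $\mathcal{X}_{m-1}$, so $X_m$ has orthonormal columns and spans $\mathcal{X}_m$. The Frobenius norm splits column-wise:
\begin{equation*}
\|W_\ell^TX_m\|_F^2=\|W_\ell^TX_{m-1}\|_F^2+\|W_\ell^Tx_m^{\mathrm{c}}\|^2 .
\end{equation*}
Since $\|x_m^{\mathrm{c}}\|=1$, the last term equals $\sin^2\angle(\mathcal{N}_{\epsilon},x_m^{\mathrm{c}})$, again as in the first display of the proof of Lemma~\ref{lemma1}. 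Applying the identity from the first step to $\mathcal{X}_m$ and to $\mathcal{X}_{m-1}$ then gives the claimed equality.

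The only delicate point is the first step. I must justify that the canonical-angle sines are the singular values of $W_\ell^TX$ rather than the square roots of $1-\sigma^2(W_\epsilon^TX)$ listed in some other order. I also need the convention that $\angle(\mathcal{N}_\epsilon,\mathcal{X}_m)$ consists of $m$ angles, which requires $m\le n$. This holds for $m=2,\dots,n$. Since the Frobenius norm depends only on the multiset of singular values, ordering is irrelevant. I expect this to be a brief appeal to the standard characterization in \cite{jiastewart2001}, and I do not anticipate a real obstacle.
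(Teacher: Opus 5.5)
Your proposal is correct and matches the paper's proof: you identify $\|\sin\angle(\mathcal{N}_{\epsilon},\mathcal{X}_{m})\|_F^2$ with $\|W_\ell^TX_m\|_F^2$ and split the Frobenius norm over the columns of $X_m=[X_{m-1},x_m^{\mathrm{c}}]$, exactly as the paper does. Your justification of the singular-value characterization via $X^TW_\epsilon W_\epsilon^TX+X^TW_\ell W_\ell^TX=I$, valid since $m\le n$, fills in a step the paper takes for granted.
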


\begin{proof}
	Since the columns of $X_{m-1}$, $X_{m}=[X_{m-1},x_{m}^{\mathrm{c}}]$   
	and $W_\ell$ form orthonormal bases for $\mathcal{X}_{m-1}$, 
	$\mathcal{X}_{m}$ and $\mathcal{R}_{\ell}$, respectively, 
	by definition, we have
	\begin{align*}
		\|\sin\angle(\mathcal{N}_{\epsilon},\mathcal{X}_{m})\|_F^2
		= &\| W_\ell^TX_{m}\|_F^2 = \|W_\ell^TX_{m-1}\|_F^2 + 
			\|W_\ell^Tx_{m}^{\mathrm{c}}\|^2\nonumber \\ 
		= &\|\sin\angle(\mathcal{N}_{\epsilon},\mathcal{X}_{m-1})\|^2
			+ \sin^2\angle(\mathcal{N}_{\epsilon},x_{m}^{\mathrm{c}}).
	\end{align*} 
\end{proof}

With the help of Lemmas~\ref{lemma1} and \ref{lemma2}, we can now 
establish the following result. 
 
\begin{theorem}\label{thm5}
Define $\vartheta_m = \max\limits_{1\leq j\leq m}
\angle(\mathcal{N}_{\epsilon},y_j)$ 
	and $\delta_m=\max\limits_{1\leq j\leq m}\Delta_{i_j}$ for 
	$1\leq m\leq n$. 
Suppose that $\vartheta_n<\frac{\pi}{2}$, 
$\delta_n\bm{\epsilon}^2\leq 1$, and 
the stopping tolerance $\varepsilon$ in \eqref{convergence} satisfies
\begin{equation}\label{condition}
 \varepsilon <\frac{1}{\kappa\tan\vartheta_n}
 \left(\frac{1-\delta_{n}\bm{\epsilon}^2}{n-1} 
 -2\sqrt{ \delta_{n}} \bm{\epsilon} \right),    
\end{equation} 
where $\kappa$ is the numerical condition number of $A$.  
Then, for $m=1,\dots n$, it holds that  
\begin{equation}\label{accXm}
\|\sin\angle(\mathcal{N}_{\epsilon},\mathcal{X}_{m})\|_F 
\leq \frac{\sqrt{m}\kappa\varepsilon}
{\sqrt{1-(m-1)\left(\frac{\tan\vartheta_m}{1-\delta_{m}\bm{\epsilon}^2} 
		\cdot \kappa\varepsilon +\frac{2\sqrt{ \delta_{m}}   
			\bm{\epsilon}}{1-\delta_{m}\bm{\epsilon}^2} \right)   }}.
\end{equation} 
\end{theorem}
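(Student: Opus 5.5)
The plan is induction on $m$, combining Theorem~\ref{thm2} with Lemmas~\ref{lemma1} and~\ref{lemma2}. Throughout write $s_m=\|\sin\angle(\mathcal{N}_{\epsilon},\mathcal{X}_m)\|_F$. Set
\[
\tau_m=\frac{\tan\vartheta_m}{1-\delta_m\bm{\epsilon}^2}\,\kappa\varepsilon+\frac{2\sqrt{\delta_m}\bm{\epsilon}}{1-\delta_m\bm{\epsilon}^2},
\]
so that the claim reads $s_m\le\sqrt{m}\,\kappa\varepsilon/\sqrt{1-(m-1)\tau_m}$.

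First, condition \eqref{condition} should give $(n-1)\tau_n<1$. Indeed, multiply \eqref{condition} by $\kappa\tan\vartheta_n$ and divide by $1-\delta_n\bm{\epsilon}^2$. Since $\tau_m\le\tau_n$ by monotonicity of $\vartheta_m$ and $\delta_m$, every denominator in \eqref{accXm} is then positive. The hypotheses $\vartheta_n<\pi/2$ and $\delta_n\bm{\epsilon}^2\le1$ make the conditions of Theorems~\ref{thm1} and~\ref{thm2} and of Lemma~\ref{lemma1} hold in each outer cycle. Here $y_j\notin\mathcal{R}_\ell$ follows from $\angle(\mathcal{N}_\epsilon,y_j)<\pi/2$. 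I would require strict inequality $\delta_n\bm{\epsilon}^2<1$, which is implicit in \eqref{condition} being meaningful.

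Base case $m=1$: $\mathcal{X}_1=\spans\{\tilde x_1\}$, so Theorem~\ref{thm2} gives $s_1=\sin\angle(\mathcal{N}_\epsilon,\tilde x_1)\le\kappa\varepsilon$.

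Inductive step. Abbreviate $a=\sin\angle(\mathcal{N}_\epsilon,\tilde x_m)\le\kappa\varepsilon$ (by Theorem~\ref{thm2}), $b=\|\sin\angle(\mathcal{N}_\epsilon,\mathcal{X}_{m-1})\|\le s_{m-1}$, and $c=\cos\angle(\mathcal{X}_{m-1},\tilde x_m)$. By \eqref{cos2}, with $\tan\angle(\mathcal{N}_\epsilon,y_m)\le\tan\vartheta_m$ and $\Delta_{i_m}\le\delta_m$, we get $c\le b\,\tan\vartheta_m/(1-\delta_m\bm{\epsilon}^2)+2\sqrt{\delta_m}\bm{\epsilon}/(1-\delta_m\bm{\epsilon}^2)$. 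Then Lemma~\ref{lemma1} gives $\sin\angle(\mathcal{N}_\epsilon,x_m^{\mathrm c})\le(a+bc)/\sqrt{1-c^2}$, and Lemma~\ref{lemma2} gives $s_m^2=s_{m-1}^2+\sin^2\angle(\mathcal{N}_\epsilon,x_m^{\mathrm c})$.

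The target form suggests proving the stronger, cleaner statement $s_m^2\le m\kappa^2\varepsilon^2/(1-(m-1)\tau_m)$ by a crude argument. Two bounds are needed.
\begin{itemize}
\item \textbf{Bound on $c$.} Using $b\le s_{m-1}$, I hope to show $b\le\kappa\varepsilon$ up to the correction factor, which yields $c\le\tau_m$. To avoid circularity, I would strengthen the induction hypothesis to include $\|\sin\angle(\mathcal{N}_\epsilon,\mathcal{X}_{m-1})\|_2\le\kappa\varepsilon\cdot(\text{factor})$. The idea is to use the spectral norm $b$ rather than the Frobenius $s_{m-1}$, so that the factor $\sqrt{m-1}$ does not enter $c$.
\item \textbf{Bound on $\sin\angle(\mathcal{N}_\epsilon,x_m^{\mathrm c})$.} Here I would use $(a+bc)^2/(1-c^2)\le(a^2+b^2)/(1-c^2)\cdot(1+c^2)$-type estimates, or more simply $(a+bc)^2\le(a^2+b^2)(1+c^2)$ by Cauchy--Schwarz. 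This gives $\sin^2\angle(\mathcal{N}_\epsilon,x_m^{\mathrm c})\le(a^2+b^2)/(1-c)$. Telescoping and using $1-c\ge1-\tau_m$ with $\prod(1-\tau)^{-1}$-style accumulation, I would then aim to bound the total by $m\kappa^2\varepsilon^2/(1-(m-1)\tau_m)$. The linear denominator $1-(m-1)\tau_m$ should come from $\prod_{j}(1-\tau_j)\ge1-\sum_j\tau_j\ge1-(m-1)\tau_m$.
\end{itemize}

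The main obstacle is making this accumulation close exactly into the stated form. Two issues need control: the dependence of $c$ on $b$, and keeping the coefficient of $\kappa^2\varepsilon^2$ equal to $m$ rather than growing geometrically. If the direct route fails, I would first try bounding $b$ simply by $1$ in \eqref{cos2} only where needed, and use the crude estimate $\sin\angle(\mathcal{N}_\epsilon,\mathcal{X}_m)$ spectral $\le$ Frobenius. I would then check whether the resulting recursion $s_m^2\le s_{m-1}^2+(\kappa\varepsilon+s_{m-1}\tau_m)^2/(1-\tau_m^2)$ is dominated by the claimed closed form via direct substitution.
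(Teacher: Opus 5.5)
Your framework is right and matches the paper's: induction on $m$, the base case from Theorem~\ref{thm2}, the ingredients \eqref{accxm}, \eqref{cos2} and Lemma~\ref{lemma2}, and $(n-1)\tau_n<1$ from \eqref{condition}. But the inductive step is never closed, and both routes you sketch fail.

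\emph{First route.} The Cauchy--Schwarz step $(a+bc)^2\le(a^2+b^2)(1+c^2)$ gives $b^2$ full weight. Your recursion $s_m^2\le s_{m-1}^2+(a^2+b^2)/(1-c)$ therefore has coefficient about $2$ on $s_{m-1}^2$. Already for $m=2$, where $b=s_1$ and possibly $s_1=a=\kappa\varepsilon$, it yields at least $3\kappa^2\varepsilon^2$. That exceeds the target $2\kappa^2\varepsilon^2/(1-\tau_2)$ whenever $\tau_2<1/3$. In general it grows geometrically, and no accumulation of denominators repairs a loss in the numerator.

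\emph{Second route.} The reduction $c\le\tau_m$ needs the spectral bound $b\le\kappa\varepsilon$ with an $m$-independent factor. Theorem~\ref{thm2} and Lemmas~\ref{lemma1}--\ref{lemma2} cannot supply this, because they control each vector separately. It also need not hold. Theorem~\ref{thm2} is sharp when the $\mathcal{R}_{\ell}$-component of $\tilde x_j$ lies along the right singular vector of $\sigma_{\ell}$, which is also the slowest LSQR direction. If every cycle leaves its error there, $W_{\ell}^TX_{m-1}$ is essentially rank one and $b\approx s_{m-1}\approx\sqrt{m-1}\,\kappa\varepsilon$. Your fallback recursion with $\tau_m$ rests on this unproved bound. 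Bounding $b$ by $1$ instead gives a bound on $c$ of size $\tan\vartheta_m$, which is vacuous.

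\emph{The missing idea.} The $\sqrt{m-1}$ inside $c$ is harmless provided you never weaken $1-c^2$ to $1-c$. Lemmas~\ref{lemma1} and~\ref{lemma2} combine exactly:
\begin{align*}
s_m^2\le s_{m-1}^2+\frac{(a+s_{m-1}c)^2}{1-c^2}=\frac{s_{m-1}^2+a^2+2a\,s_{m-1}c}{1-c^2}.
\end{align*}
Here the $s_{m-1}^2c^2$ terms cancel, so $c$ enters the numerator only through the small cross term and the denominator only quadratically. With a first-order denominator $1-c$ and $c$ of order $\sqrt{m-1}\,\tau_m$, the losses would accumulate to roughly $1-\mathcal{O}(m^{3/2})\tau_m$; that is why you felt forced to seek $c\le\tau_m$.

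So take $a\le\kappa\varepsilon$ and $c\le\mu_m s_{m-1}+\nu_m$, where $\mu_m=\tan\vartheta_m/(1-\delta_m\bm{\epsilon}^2)$, $\nu_m=2\sqrt{\delta_m}\bm{\epsilon}/(1-\delta_m\bm{\epsilon}^2)$, and $s_{m-1}$ is the Frobenius quantity itself. The right-hand side is then increasing in $s_{m-1}$. You may therefore insert the induction hypothesis $s_{m-1}^2\le(m-1)\kappa^2\varepsilon^2/D$ with $D=1-(m-2)\tau_m$, using $\tau_{m-1}\le\tau_m$. Multiply numerator and denominator by $D$ and substitute $\mu_m\kappa\varepsilon=\tau_m-\nu_m$ to get
\begin{align*}
s_m^2\le\kappa^2\varepsilon^2\,\frac{m+m\tau_m+2\nu_m\sqrt{m-1}\bigl(\sqrt{D}-\sqrt{m-1}\bigr)}{D-\bigl(\tau_m\sqrt{m-1}+\nu_m(\sqrt{D}-\sqrt{m-1})\bigr)^2}.
\end{align*}
Since $\sqrt{D}\le\sqrt{m-1}$, the numerator is at most $m(1+\tau_m)$. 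The squared quantity equals $\mu_m\kappa\varepsilon\sqrt{m-1}+\nu_m\sqrt{D}\in[0,\tau_m\sqrt{m-1}]$. Hence the denominator is at least $1-(m-2)\tau_m-(m-1)\tau_m^2=(1+\tau_m)(1-(m-1)\tau_m)>0$. The factor $1+\tau_m$ cancels, giving exactly $s_m^2\le m\kappa^2\varepsilon^2/(1-(m-1)\tau_m)$. This is the paper's argument.
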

 
\begin{proof} 
	Denote $\Theta_m = \|\sin\angle(\mathcal{N}_{\epsilon}, 
	\mathcal{X}_{m})\|_F$, $m=1,\dots,n$. 	
	Then \eqref{accXm} simplifies to   
	\begin{equation}\label{simplify}
		\Theta_{m}^2 \leq \frac{m\kappa^2\varepsilon^2}{1-(m-1)\omega_m}, 
	\end{equation}
where $\omega_m=\mu_m\kappa\varepsilon +\nu_m$ with 
$\mu_m=\frac{\tan\vartheta_m}{1-\delta_{m}\bm{\epsilon}^2}$ and 
$\nu_m=\frac{2\sqrt{\delta_{m}} \bm{\epsilon}}{1-\delta_{m}\bm{\epsilon}^2}$. 
Note that the monotonic increase of $\vartheta_m$ and $\delta_m$ 
implies the same for $\omega_m$. Consequently, condition 
\eqref{condition} guarantees that the denominator satisfies 
$1-(m-1)\omega_m \geq 1-(n-1)\omega_n > 0$.
 
We establish \eqref{simplify} by induction. For $m=1$, \eqref{simplify} 
reduces to $\Theta_{1} \leq\kappa\varepsilon$, which follows immediately 
from Theorem~\ref{thm2} since $X_1=x_{1}^{\mathrm{c}}=\tilde x_{1}$. 
Assume that \eqref{simplify} holds for $m-1\geq1$.  
For $m>1$, since $\|\sin\angle(\mathcal{N}_{\epsilon},\mathcal{X}_{m-1})\|
\leq \|\sin\angle(\mathcal{N}_{\epsilon},\mathcal{X}_{m-1})\|_F=\Theta_{m-1}$,  
\eqref{cos2} simplifies to 
$$
\cos\angle(\mathcal{X}_{m-1},\tilde x_{m})
\leq \Theta_{m-1}\cdot  \frac{\tan\vartheta_m}{1-\delta_{m}\bm{\epsilon}^2} 
+\frac{2\sqrt{  \delta_{m}}   \bm{\epsilon}}{1-\delta_{m}\bm{\epsilon}^2} =  
\mu_m\Theta_{m-1}+\nu_m. 
$$
Applying this to \eqref{accxm} and using 
$\sin\angle(\mathcal{N}_{\epsilon},\tilde x_{m})\leq\kappa\varepsilon$ 
from Theorem~\ref{thm2}, we obtain
\begin{equation*}  
	\sin\angle(\mathcal{N}_{\epsilon},x_{m}^{\mathrm{c}}) 
	\leq \frac{\kappa\varepsilon
		+ \Theta_{m-1}(\mu_m\Theta_{m-1}+\nu_m) }
	{\sqrt{1-(\mu_m\Theta_{m-1}+\nu_m)^2}}.  
\end{equation*}
Substituting this relation into Lemma~\ref{lemma2} yields   
\begin{align*}
	\Theta_{m}^2 &= \Theta_{m-1}^2
	+ \sin^2\angle(\mathcal{N}_{\epsilon},x_{m}^{\mathrm{c}})  
	\leq\frac{(1+2\mu_m\kappa\varepsilon)\Theta_{m-1}^2 
		+ \kappa^2\varepsilon^2 + 2\nu_m\kappa\varepsilon \Theta_{m-1} }
	{1-(\mu_m\Theta_{m-1}+\nu_m)^2}.    
\end{align*}
Exploiting the inductive hypothesis for $m-1$ and the property 
$\omega_{m-1} \leq \omega_m$, we deduce   
{\footnotesize \begin{align*}
	\Theta_{m}^2 &\leq   \frac{(1+2\mu_m\kappa\varepsilon) 
		\cdot \frac{(m-1)\kappa^2\varepsilon^2}{1-(m-2)\omega_{m}} 
		+ \kappa^2\varepsilon^2 + 2\nu_m\kappa\varepsilon 
		\cdot\tfrac{\sqrt{m-1}\kappa\varepsilon}{\sqrt{1-(m-2)\omega_{m}}}}
	{1-(\mu_m\cdot \frac{\sqrt{m-1}\kappa\varepsilon}
		{\sqrt{1-(m-2)\omega_{m}}}+\nu_m )^2} \nonumber \\ 
	&=\kappa^2\varepsilon^2 \cdot \frac{m+m\omega+2\nu_m\sqrt{m-1}
		\left(\sqrt{1-(m-2)\omega_m}-\sqrt{m-1}\right)}   
	{1-(m-2)\omega_{m}-\left(\omega_m \sqrt{m-1} + 
		\nu_m(\sqrt{1-(m-2)\omega_{m}}-\sqrt{m-1})\right)^2} \nonumber \\  
	&\leq \kappa^2\varepsilon^2 \cdot \frac{m+m\omega_m}   
	{1-(m-2)\omega_{m}-\left(\omega_m\sqrt{m-1}\right)^2} =
	\kappa^2\varepsilon^2 \cdot \frac{m}{1-(m-1)\omega_m },  
\end{align*}}
\hspace{-0.5em}
where the equality follows by multiplying the numerator and 
denominator by $1-(m-2)\omega_m$ and substituting 
$\mu_m\kappa\varepsilon=\omega_m-\nu_m$. 
The subsequent inequality holds since 
$\sqrt{1-(m-2)\omega_{m}}<\sqrt{m-1}$. 
Thus, \eqref{simplify} holds for $m$, completing the induction. 
\end{proof}

Theorem~\ref{thm5} extends the accuracy bound established in 
Theorem~\ref{thm2} for a single approximate numerical null 
vector to an $m$-dimensional approximate numerical null subspace 
for $m=1,\dots,n$. 
In particular, \eqref{accXm} reduces to \eqref{accxke} when $m=1$. 
Given the random vectors $\hat y_1,\dots,\hat y_n$ and the sequence 
$y_1,\dots,y_n$ defined in \eqref{defy}, the estimate \eqref{xm} 
suggests that $\tan\vartheta_n\approx\sqrt{N-n}$. 
Moreover, \eqref{delta} indicates that the parameter 
$\delta_m\lesssim\mathcal{O}(\kappa^2)$. 
Consequently, condition \eqref{condition} is readily fulfilled 
provided that the stopping threshold $\varepsilon$ in 
\eqref{convergence} is sufficiently small to satisfy \eqref{sig2}.  
In this case, \eqref{accXm} simplifies to  
\begin{equation}\label{accXm2}
	\|\sin\angle(\mathcal{N}_{\epsilon},\mathcal{X}_{n})\|_F \leq
	\sqrt{n}\kappa\varepsilon(1+\mathcal{O}(\kappa\varepsilon)).
\end{equation} 
 
\subsection{Termination}\label{subsec:3-2}
In practice, the dimension $n$ of the numerical null space  
$\mathcal{N}_{\epsilon}$ is generally unknown \textit{a priori}.  
It must be determined adaptively during the computation 
to ensure proper termination of the LSQRNS algorithm. 

Assume that $n$ approximate, orthonormal numerical null vectors 
$x_{1}^{\mathrm{c}}$, $\dots$, $x_{n}^{\mathrm{c}}$ of $A$ have 
already been computed. For the $(n+1)$-th starting vector 
$y_{n+1}=\frac{\bar y_{n+1}}{\|\bar y_{n+1}\|}$ defined in \eqref{defy}, 
where $\bar y_{n+1}=(I-X_nX_n^T)\hat y_{n+1}$, we have 
\begin{align*} 
	\sin\angle(\mathcal{R}_{\ell},y_{n+1}) 
	& =\sin\angle(\mathcal{R}_{\ell},\bar y_{n+1}) 
	= \frac{\|W_{\epsilon}^T(I-X_nX_n^T)\bar y_{n+1}\|}
	  {\|\bar y_{n+1}\|} \nonumber \\
	& \leq \| W_{\epsilon}^T(I-X_nX_n^T)\| 
	= \|\sin\angle(\mathcal{N}_{\epsilon},\mathcal{X}_{n})\|
	=\mathcal{O}(\varepsilon).
\end{align*} 
Invoking Remark~\ref{remark3-4}, we deduce that during the $(n+1)$-th 
outer cycle of LSQRNS, the monotonically decreasing norm $\|x_i\|$ of 
the LSQRNV iterate will drop below $\mathcal{O}(\varepsilon)$ well 
before the convergence criterion \eqref{convergence} is met.
Conversely, during the $m$-th outer cycle for $m \leq n$, the bound 
\eqref{orthdecompose} and the estimate \eqref{xm} guarantee that this 
norm remains bounded from below by $(1-\mathcal{O}(\bm{\epsilon}^2))\|x_m^*\| 
=  (1-\mathcal{O}(\bm{\epsilon}^2)) \cos\angle(\NN_{\epsilon},y_{m})  
\approx\frac{\sqrt{n-m+1}}{\sqrt{N-m+1}}\geq\frac{1}{\sqrt{N-n+1}}$.
Thus, a significant drop in $\|x_i\|$ below $\frac{1}{\sqrt{N-m+2}}$ 
reliably indicates that $m=n+1$.   
Consequently, by virtue of \eqref{resLSQR2}, if the quantity
$|\varphi_i|=\|x_i\|$ generated during 
the $m$-th outer cycle of LSQRNS satisfies 
\begin{equation}\label{terminate}
	|\varphi_{i}|\leq0.001/\sqrt{N-m+2}, 
\end{equation}
we conclude that all $n=m-1$ orthonormal basis vectors have been found. 
We then terminate LSQRNS and return the basis matrix  
$X_{n}=[x_{1}^{\mathrm{c}}, \cdots,x_{n}^{\mathrm{c}}]$.

Notably, the final cycle of LSQRNS is typically more 
economical than the preceding $n$ ones, as it halts via 
\eqref{terminate} long before satisfying \eqref{convergence}. 
Furthermore, by Remark~\ref{remark2-1}, if LSQRNV breaks 
down with $\beta_{i}=0$ at any stage $i$ during cycle $m$, 
it simply produces $\tilde x_{m}=\bm{0}$, triggering immediate 
termination of LSQRNS.  

Algorithm~\ref{alg3} details three modifications to Algorithm~\ref{alg2} 
for the $m$-th outer cycle of LSQRNS. 
A return value of $\mathrm{flag}_m=0$ indicates normal convergence to a 
valid approximate numerical null vector $\tilde x_{m}$. Alternatively, 
$\mathrm{flag}_m=-1$ signals termination due to \eqref{terminate} or 
a breakdown, thereby identifying the dimension $n=m-1$. 
 
\begin{algorithm}[htbp]
	\caption{Modifications to Algorithm~\ref{alg2} in the $m$th 
		outer cycle of LSQRNS}\label{alg3}
	\begin{itemize}[leftmargin=0.6cm]
		\item[\footnotesize \ref{step1}:]  \textbf{if} $\beta_i=0$ 
		\textbf{then} set $\mathrm{flag}=-1$ and break;
		\textbf{else} compute $v_{i+1}=\frac{r_i}{\beta_i}$. \textbf{fi}
		
		\item[\footnotesize  \ref{step2}:] \textbf{if}  
		$\varphi_i \!\leq\!\frac{0.001}{\sqrt{N-m+2}}$ \textbf{then} 
		set $\mathrm{flag}\!=\!-1$ and break;
		\textbf{else} update $x_i=s_i x_{i-1}\!+\!c_iv_{i+1}$ \textbf{fi}
		
		\item[\footnotesize  \ref{step3}:]  \textbf{if} $|\alpha_{i+1}c_{i}
		|\leq  \tilde\sigma_1\varepsilon$  
		\textbf{then} set $\mathrm{flag}=0$ and break; \textbf{else} 
		compute $u_{i+1}=\frac{t_i}{\alpha_{i+1}}$. \textbf{fi} 
	\end{itemize} 
\end{algorithm} 
  
\subsection{The LSQRNS algorithm: a pseudocode}\label{subsec:3-3} 

\begin{algorithm}[htbp]
	\caption{LSQRNS for the null space problem  of $A$}
	\begin{algorithmic}[1]\label{alg4}
		\STATE{Initialization: Chose $\gamma>0$ and set $X_{0}=[\ ]$.}
		
		\FOR{$m=1,\cdots,m_{\max}\leq N$}
		
		\STATE{\label{step5-3} Generate the random vector 
			$\hat y_m\in\mathbb{R}^N$ with  
			$\EE(\hat y_m)=\bm{0}$ and 
			$\cov(\hat y_m)=\gamma I$, and compute 
			$y_m=\left(I-X_{m-1}X_{m-1}^T\right)\hat y_m/
			\|\left(I-X_{m-1}X_{m-1}^T\right)\hat y_m\|$.}  
		
		\STATE{\label{step5-4}Implement Algorithm~\ref{alg3} 
			with the initial vector $y_m$,
			and obtain $\mathrm{flag}_m$ and $\tilde x_{m}$.} 
		
		\STATE{\textbf{if} $\textit{flag}_m = -1$ 
			\textbf{then} return $n=m-1$ and $X=X_{n}$. \textbf{fi}}
		
		\STATE{\label{step5-6}Compute $ x_{m}^{\mathrm{c}}=\frac{(I-X_{m-1}
				X_{m-1}^T)\tilde x_{m}}{\|(I-X_{m-1}X_{m-1}^T)\tilde x_{m}\|}$ 
			and update $X_{m}=[X_{m-1},x_{m}^{\mathrm{c}}]$.}
		
		\ENDFOR 
		
		\STATE{Return $n=m_{\max}$ and $X=X_n$.}	 
	\end{algorithmic}
\end{algorithm} 

Algorithm~\ref{alg4} outlines the proposed LSQRNS algorithm with deflation. 
During the first cycle with $m=1$, the bidiagonal projected matrix $B_{30}$ 
defined in \eqref{LBDmat} is stored. The full SVD of $B_{30}$ is then computed, 
and its largest singular value $\tilde\sigma_1$ serves as a valid approximation 
to $\sigma_1$, the largest singular value of $A$, and is incorporated into 
the inner stopping criterion \eqref{conLSQR3} for all subsequent cycles; 
see step~8 of Algorithm~\ref{alg3}.
 
By setting the maximum number of outer cycles, $m_{\max}$, to 
the default value $N$, the LSQRNS algorithm adaptively determines 
the dimension $n$ of the numerical null space $\mathcal{N}_{\epsilon}$ 
for a large matrix $A$ with respect to 
$\bm{\epsilon}\leq \mathcal{O}(\varepsilon)$, assuming the corresponding 
numerical condition number satisfies $\kappa\ll\varepsilon^{-1}$ 
(cf. \eqref{sig2}).   
Simultaneously, it computes an approximate orthonormal basis matrix $X_n$
for $\mathcal{N}_{\epsilon}$, ensuring that the corresponding subspace 
$\mathcal{X}_n=\spans\{X_n\}$  adheres to \eqref{accXm2}. 
Particularly, if $n$ is known \textit{a priori} in certain applications, 
one may set $m_{\max}=n$ to avoid the redundant final cycle and 
terminate the algorithm early. 
Furthermore, by setting $m_{\max}=1$, LSQRNS serves as an efficient 
detector for numerical rank deficiency:  an empty output $X$ 
indicates full column rank, whereas a non-empty $X$ confirms numerical 
rank deficiency of $A$.

\section{Numerical experiments}\label{sec:5}
We report numerical experiments on several problems to illustrate the 
performance of the LSQRNS algorithm proposed in this paper, which was 
coded in MATLAB. All the experiments were performed on a 13th 
Gen Intel(R) Core(TM) i9-13900KF 3.00 GHz workstation with 64GB RAM 
using the MATLAB R2022b platform with the machine precision 
$\bm{\epsilon}=2.22\times10^{-16}$ 
under the Miscrosoft Windows 11 Professional system. 
 
\subsection{Performance in Null Space Computation}

\begin{table}[tbhp] 
	\caption{Properties of the test matrices: part I.}\label{table0}
	\begin{center}
		\begin{tabular}{ccccccc} \toprule
			{$A$}&$M$&$N$&$\mathrm{nnz}$& $n$ & $\sigma_{1}$&$\kappa$ \\ \midrule 
			shar\_te2-b2 &200200&17160&600600&285&8.06& 1.59  \\ 
			relat8 &345688&12347&1334038&58&18.8& 15.7  \\ 
			se &32768&32768&98300&457&3.00& 255  \\ 
			stormg2-27* &37485 &14441 &94274 &54 &545  &3.99e+3 \\
			k1\_san &67759 &67759 &559774 &1 &136 &6.52e+5 \\
			barth &6691&6691&26439&1&4.89& 8.28e+5  \\   
			M80PI\_n1 &4028&4028&9927&3&3.98&2.34e+6\\  
			pdb1HYS &36417&36417&4344765&6&352& 2.55e+6  \\ 
			Franz11 &47104  &30144 &329728  &9151 &7.48 &6.85 \\
			GL7d14  &171375 &47271 &1831183 &7939 &17.5 &10.3 \\
			\bottomrule
		\end{tabular}
	\end{center}
\end{table}
 
To evaluate the practical effectiveness of LSQRNS, we selected a diverse 
set of test matrices from the SuiteSparse Matrix Collection 
\cite{davis2011university}, representing various realistic applications. 
Specifically, the test cases include:  
(\romannumeral1) computational algebra and topology 
(e.g., ``shar\_te2-b2'', ``relat8'',  ``Franz11'', ``GL7d14''), 
where null spaces are essential 
for computing cohomology groups and uncovering the topological invariants 
of abstract algebraic structures;
(\romannumeral2) optimization and control systems 
(e.g., ``stormg2-27*'', ``M80PI\_n1''), where null-space 
computations are critical for determining feasible search 
directions and identifying invariant subspaces for model reduction
(\romannumeral3) structural and fluid dynamics 
(e.g., ``k1\_san'', ``barth''), where null spaces 
are instrumental in detecting structural instabilities and resolving constrained fluid flows;  
(\romannumeral4) complex network analysis (e.g., , ``se''), 
where null vectors facilitate the partitioning of 
topological systems;  
and (\romannumeral5) statistical and molecular modeling 
(e.g., ``pdb1HYS''), where null-space analysis reveals 
linear redundancies in datasets as well as the conformational 
flexibility of macromolecules. 
By design, these test cases feature the severe ill-conditioning, 
high dimensionality, and structural complexity characteristic 
of large-scale engineering problems. 
The fundamental properties of the matrices are summarized in Table~\ref{table0}, 
where $\mathrm{nnz}$ denotes the number of nonzero entries, while 
$n$ and $\kappa$ represent the numerical nullity and condition number, 
respectively. 
For matrices of moderate size, $\sigma_1$, $\kappa$ and $n$ were computed 
using the MATLAB built-in function \texttt{svd} with a truncation 
tolerance of $\sigma_1\cdot 10^{-14}$. 
For the exceptionally large instance, ``k1\_san'', these quantities 
were derived by computing the largest singular value via the MATLAB 
built-in function \texttt{svds}, and several of the smallest singular 
values using the JDSVD-V\_HYBRID algorithm \cite{huang2024preconditioning} 
(MATLAB code available at \url{https://github.com/JinzhiHuang/JDSVD-V}). 
  
\begin{exper}\label{exper1}
	We computed the orthonormal bases for the numerical null spaces of 
	the first eight test matrices in Table~\ref{table0}. 
	For this purpose, we employed our LSQRNS algorithm 
	alongside the randomized small-block Lanczos (TRLanczos) algorithm 
	proposed in \cite{kressner2026randomized} (MATLAB code available at 
	\url{https://github.com/nShao678/Nullspace-code}). 
	In our numerical tests, the stopping tolerance for LSQRNS was set to 
	$\varepsilon = 10^{-10}$, while the rounding and stopping threshold for 
	TRLanczos was configured as $\mathrm{eps}0 = \min\{\sigma_1\cdot 10^{-10}, 
	\sigma_{\ell}^2\cdot 10^{-2}\}$.      
\end{exper}
 
For LSQRNS, we set $m_{\max}=N$ and generated the random seed vectors 
$\hat y_m$ ($m \ge 1$) via MATLAB's \texttt{randn}. 
For TRLanczos, to guarantee convergence and minimize the 
total iterations and execution time, we set 
the maximum Krylov basis size to $\dim \mathcal{K}=1024$. 
Following \cite{kressner2026randomized}, the corresponding 
block size was set to $d=\dim \mathcal{K}/16$.  
Table~\ref{table1} summarizes the computational results, where 
``MVs'' denotes the total matrix-vector products with 
$A$ or $A^T$, and ``Time'' records the CPU time in seconds measured 
via MATLAB's \texttt{tic} and \texttt{toc}. 
The metric $\mathrm{RelRes} = \|AX\|/\|A\|$ represents the relative 
residual norm of the computed approximate orthonormal basis matrix $X$. 
Analogous to Theorem~\ref{thm2}, the inequality 
$\sin\angle(\mathcal{N}_{\epsilon}, \mathcal{X})
\leq\kappa\cdot\mathrm{RelRes}$ holds, 
implying that $\mathrm{RelRes}$ effectively quantifies the numerical 
accuracy of $\mathcal{X}$. 
Finally, a hyphen "$-$" indicates that the algorithm failed to converge 
within an $8$-hour limit and was manually terminated, as TRLanczos 
lacks a built-in stopping criteria for MVs or time.   
   
	   
\begin{table}[tbhp]  \small
	\caption{Computational results of Experiment~\ref{exper1}.}\label{table1} 
	\begin{center} 
		\begin{tabular}{ccccccccc} \toprule
			\multirow{2}{*}{$A$} &\multicolumn{4}{c}{LSQRNS}
			&\multicolumn{4}{c}{TRlanczos} 
			\\ \cmidrule(lr){2-5}\cmidrule(lr){6-9} 
			&n &MVs&Time& RelRes  &n &MVs &Time& RelRes    \\
			\midrule
			shar\_te2-b2  &285  &2879     &5.25    &2e-16 &285 &4736  &7.10  &6e-12  \\
			relat8       &58   &8249     &17.0    &2e-10 &58  &6144  &9.28  &1e-11  \\
			se           &457  &1064366  &132     &2e-10 &64  &91648  &172  &2e-8   \\
			stormg2-27*  &54   &196202   &21.7    &2e-10 &54  &41472  &66.3  &3e-11   \\
			k1\_san      &1    &116578   &55.2    &1e-10 &-  &-  &-  &-    \\
			barth        &1    &55495    &3.91    &1e-10 &1  &44544  &18.4  &3e-8   \\  
			M80PI\_n1    &3    &89560    &1.49    &1e-10 &3  &10240  &2.98  &2e-10   \\ 
			pdb1HYS      &6    &915979   &3.22e+3 &1e-10 &6 &230400   &456  &3e-9   \\ 
			\bottomrule
		\end{tabular}  
	\end{center}
\end{table}
  
As shown, LSQRNS correctly determined the numerical null-space dimension 
$n$ for all eight test matrices and computed highly accurate 
orthonormal bases, consistently achieving relative residual norms 
between $10^{-16}$ and $10^{-10}$. In contrast, TRLanczos exhibited 
notable reliability and accuracy issues. 
For instance, it returned an incorrect dimension for the matrix ``se'', 
yielding $n=64$, which merely reflects the initial block size rather 
than the true dimension of $457$---a failure that persists even when  
$\dim\mathcal{K}$ is increased to $2048$.  
Furthermore, it failed to converge within the $8$-hour limit for 
``k1\_san'', and generally yielded less accurate bases, producing 
relative residual norms of $2 \times 10^{-8}$ and $3 \times 10^{-8}$ 
for ``se'' and ``barth'', respectively. 
Additionally, the results reveal a striking discrepancy between the MV 
count and the overall CPU time for TRLanczos. 
For matrices such as ``stormg2-27*'', ``barth'', and ``M80PI\_n1'', 
TRLanczos required fewer MVs than LSQRNS yet consumed significantly more 
execution time. 
This inefficiency stems directly from the necessity of maintaining an 
impractically large Krylov basis with $\dim\mathcal{K}=1024$, which 
incurs prohibitive dense reorthogonalization costs and demands an 
auxiliary storage of $\mathcal{O}(1024N)$ in addition to storing $A$. 
In fact, adopting a more conventional dimension of $\dim\mathcal{K}=128$ 
caused severe performance deterioration across all eight test matrices. 
This reduction compromised not only computational efficiency but also 
numerical reliability, ultimately yielding incorrect null-space dimensions.    
Conversely, LSQRNS relies on short recurrences and updates the 
approximations on the fly. This strictly limits its additional 
storage requirement to $\mathcal{O}(2N+M)$ and keeps the per-iteration 
overhead negligible compared to the cost of the matrix-vector products. 
Although TRLanczos converged faster on specific test cases such as 
``relat8'' and ``pdb1HYS'', its poor robustness and excessive memory 
demands severely limit its practical utility. 
Ultimately, LSQRNS provides a fundamentally more reliable, 
memory-efficient, and structurally robust framework for large-scale 
numerical null-space computation. 
 
Moreover, as shown in Table~\ref{table1}, LSQRNS exhibited rapid 
convergence for the first four well-conditioned matrices, requiring 
only a few MVs to compute each basis vector for the numerical null space. 
In contrast, for the remaining four matrices, which are moderately to 
severely ill-conditioned as detailed in Table~\ref{table0}, the algorithm 
consumed significantly more MVs per outer cycle. 
These corroborate  the convergence results established in 
Theorem~\ref{thm1} and  the subsequent analysis. 
 
\begin{figure}[thbp]
  \centering 
  \includegraphics[width=0.85\textwidth]{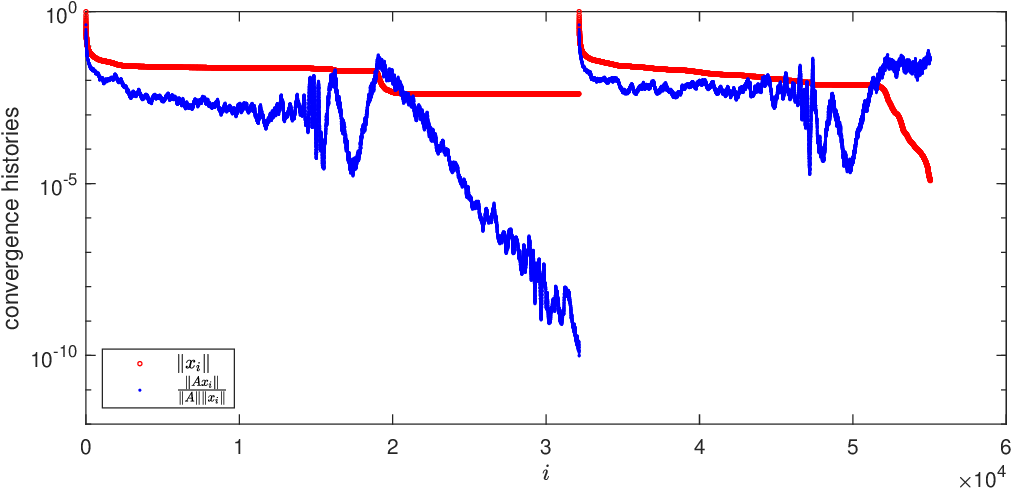}
  \caption{Convergence curve of LSQRNS for ``barth''.}\label{fig1}
\end{figure}

Particularly, for the matrix ``barth'', Figure~\ref{fig1} depicts the 
convergence curves of the LSQRNS iterates $x_i$ in terms of both the 
approximate solution norm $\|x_i\|$ and the relative residual norm 
$\|Ax_i\|/(\|A\|\|x_i\|)$. 
Given the nullity $n=1$, the convergence process exhibited a distinct 
two-phase pattern. 
As expected,  $\|x_i\|$ decreased monotonically within each cycle. 
In the first cycle, the relative residual norm fell below the stopping 
tolerance $\varepsilon$ at iteration $32156$, whereas $\|x_i\|$ 
stagnated near $4.07\times 10^{-3}$. In the second cycle, $\|x_i\|$ 
reached the termination threshold of $1.22\times10^{-5}$ at 
iteration $22894$, 
with the relative residual norm oscillating above $10^{-2}$. 
This validates the termination criterion \eqref{terminate}. 
Furthermore, the second cycle required $28.80\%$ fewer MVs than 
the first one, corroborating the efficiency gains discussed in 
Section~\ref{subsec:3-2}.  
  
\begin{exper}\label{exper2}
	We evaluated the LSQRNS and TRLanczos algorithms on the matrices 
	``Franz11'' and ``GL7d14'', both of which feature large nullities 
	as shown in Table~\ref{table0}. 
	The stopping tolerance for LSQRNS was set to $\varepsilon=10^{-8}$, 
	while the rounding and stopping threshold for TRLanczos was 
	configured as $\mathrm{eps}0= \sigma_1\cdot 10^{-8}$. 
\end{exper}

\begin{table}[tbhp]  \small 
	\caption{Computational results of Experiment~\ref{exper2}.}\label{table4} 
	\begin{center} 
		\begin{tabular}{ccccccccc} \toprule
			\multirow{2}{*}{Algorithm} &\multicolumn{4}{c}{Franz11}
			&\multicolumn{4}{c}{GL7d14} 
			\\ \cmidrule(lr){2-5}\cmidrule(lr){6-9} 
			&n &MVs&Time&\!RelRes\!  &n &MVs &Time&\!RelRes\!    \\
			\midrule
			LSQRNS &9151 &374867 &4.43e+3 &2e-7 &7939 &562105 &6.50e+3 &2e-7  \\
			TRL(4096) &3328 &100608 &573 &2e-9 &3328 &120576 &853 &1e-9  \\
			TRL(8192) &6656 &172544 &1.97e+3 &2e-9 &6656 &204288 &3.16e+3 &1e-9  \\
			\!TRL(10240)\! &8320 &203520 &3.96e+3 &2e-9 &7939 &225920 &4.31e+3  &1e-9  \\
			\!TRL(12288)\! &9151 &208128 &3.95e+3 &2e-9 &7939 &211968 &6.00e+3 &1e-9  \\ 		  
			\bottomrule
		\end{tabular}  
	\end{center}
\end{table}

Table~\ref{table4} summarizes the computational results of LSQRNS 
and TRLanczos (denoted by ``TRL'', with user-defined subspace 
dimensions in parentheses). As observed, LSQRNS successfully 
captured the complete numerical null spaces for both matrices, 
identifying exactly $n=9151$ and $7939$ numerical null vectors, 
respectively. 
These results reveal a fundamental dilemma for projection-based 
methods when dealing with large nullities. For instance, to 
achieve complete recovery, TRLanczos required a prohibitively 
large subspace dimension  (e.g., $12288$ for ``Franz11'' and 
$10240$ for ``GL7d14''). 
However, when the subspace was restricted to mitigate memory 
exhaustion with $\dim\mathcal{K}=4096$, TRLanczos terminated 
prematurely, capturing  $3328$ numerical null vectors. 
In realistic large-scale computations, maintaining and 
reorthogonalizing even a $4096$-dimensional basis poses a 
formidable memory and computational burden, making any 
further expansion practically impossible. 
LSQRNS effectively circumvents this critical bottleneck. 
Although TRLanczos can exhibit slightly shorter execution times 
when artificially provided with an optimal search subspace, 
LSQRNS requires no \textit{a priori} guess of the true nullity. 
It extracts the complete numerical null space with a strictly 
bounded memory footprint, rendering it fundamentally more 
reliable for high-nullity problems. 
 
\subsection{Robustness in rank-deficiency detection} 

To assess the reliability of LSQRNS in detecting rank deficiency, we 
investigated eight large-scale matrices selected from the SuiteSparse 
Matrix Collection \cite{davis2011university}. Stemming from diverse 
applications, these test cases range from directed graphs and circuit 
simulations to linear programming and combinatorial problems. 
In practice, distinguishing full-rank systems from rank-deficient 
ones is crucial for the numerical stability of downstream solvers.
Table~\ref{table2} summarizes the fundamental properties of these 
matrices. As shown, the first four matrices are numerically full-rank, 
while the last four are severely rank-deficient. For accurate reference, 
the extreme singular values $\sigma_1$ and $\sigma_N$ were computed 
using MATLAB's \texttt{svds} and the JDSVD-V\_HYBRID algorithm 
\cite{huang2024preconditioning}, respectively.
 
\begin{table}[tbhp] 
 	\caption{Properties of the test matrices: part II.}\label{table2}
 	\begin{center}
 		\begin{tabular}{cccccc} \toprule
 			
 			$A$&$M$&$N$&nnz&$\sigma_{1}$&$\sigma_{N}$  \\ \midrule      
 			cage15       &5154859  &5154859  &99199551  &1.02    &8.56e-2    \\
 			LargeRegFile &2111154  &801374   &4944201  &3.06e+3  &2.75e-1    \\  
 			Rucci1       &1977885  &109900   &7791168  &7.07     &1.04e-3   \\
 			Hamrle3      &1447360  &1447360  &5514242  &16.1     &8.01e-4    \\ 
 			relat9       &12360060 &549336   &38955420  &21.6    &4.47e-20    \\ 
 			rel9         &9888048  &274669   &23667183  &21.1    &1.85e-17   \\
 			wheel\_601   &902103   &723605   &2170814  &25.1     &3.02e-19    \\ 
 			lp1          &534388   &534388   &1643420  &501      &1.30e-21    \\
 			\bottomrule
 		\end{tabular}
 	\end{center}
\end{table}

\begin{exper}\label{exper3}
We applied LSQRNS to the matrices listed in Table~\ref{table2}, 
using a stopping tolerance of $\varepsilon = 10^{-12}$, and a 
maximum of one outer cycle. The numerical rank was determined 
as described in section~\ref{subsec:3-3}. 
As a benchmark, we computed the smallest singular value $\sigma_N$ 
using JDSVD-V\_HYBRID with the same tolerance.  
A matrix is considered full-rank if $\sigma_N\gg\sigma_1\varepsilon$, 
and rank-deficient otherwise.  
\end{exper}
 
\begin{table}[tbhp]
	\caption{Computational results of Experiment~\ref{exper3}.}\label{table3}
	\begin{center}
		\begin{tabular}{ccccccc} \toprule
			\multirow{2}{*}{$A$}&\multicolumn{3}{c}{RLSQRNS}&\multicolumn{3}{c}{JDSVD-V}
			\\ \cmidrule(lr){2-4}\cmidrule(lr){5-7}
			& MVs & Time & Rank & MVs & Time & Rank \\	\midrule
			cage15        &56 &6.56 &F &318  &56.5  &F \\ 
			LargeRegFile  &971 &14.6 &F &3676  &64.5  &F \\
			Rucci1        &15584 &256 &F &101427 &1.73e+3 &F \\
			Hamrle3       &28914 &338 &F &200000 &5.13e+3 &F \\
			relat9        &185 &29.0 &D &904 &242 &D \\
			rel9          &249 &20.5 &D &1053 &167 &D \\ 
			wheel\_601    &5004 &33.1 &D &59684 &1.24e+3 &D \\
			lp1           &18538 &72.3 &D &119793 &1.55e+3 &D \\ 
			\bottomrule
		\end{tabular}
	\end{center} 
\end{table}

The computational results are summarized in Table~\ref{table3}, where 
``F'' and ``D'' in the ``Rank'' column denote numerically full-rank 
and rank-deficient matrices, respectively.  
As shown, LSQRNS accurately identified the rank properties of all test 
cases, aligning with the $\sigma_N$ baseline established by 
JDSVD-V\_HYBRID (see Table~\ref{table2}). 
Compared to JDSVD-V\_HYBRID, LSQRNS converged significantly faster, 
requiring substantially fewer MVs and less CPU time.  
Particularly, for the matrix ``Hamrle3'', JDSVD-V\_HYBRID failed to 
reach the desired accuracy within the maximum limit of 200,000 MVs, 
indicating a severe clustering of its smallest singular values. 
In contrast, LSQRNS converged rapidly for this matrix. 
These results demonstrate that for large-scale matrices, one should 
use specialized algorithms like LSQRNS to solve null-space problems, 
rather than relying on general-purpose eigenvalue or SVD solvers.
 
\begin{figure}[t]
	\centering
	\includegraphics[width=0.48\textwidth]{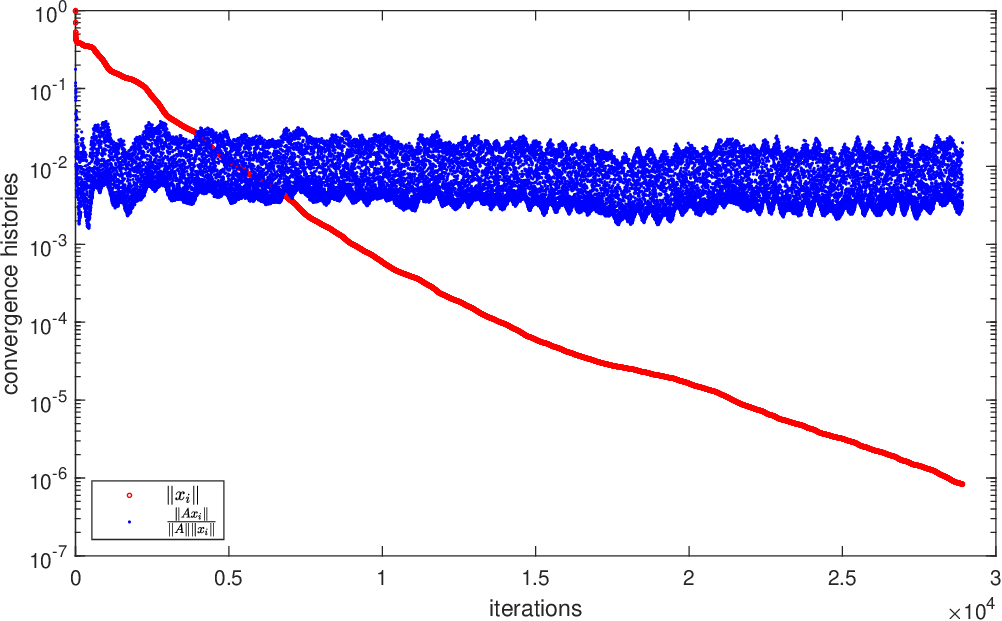}
	\includegraphics[width=0.48\textwidth]{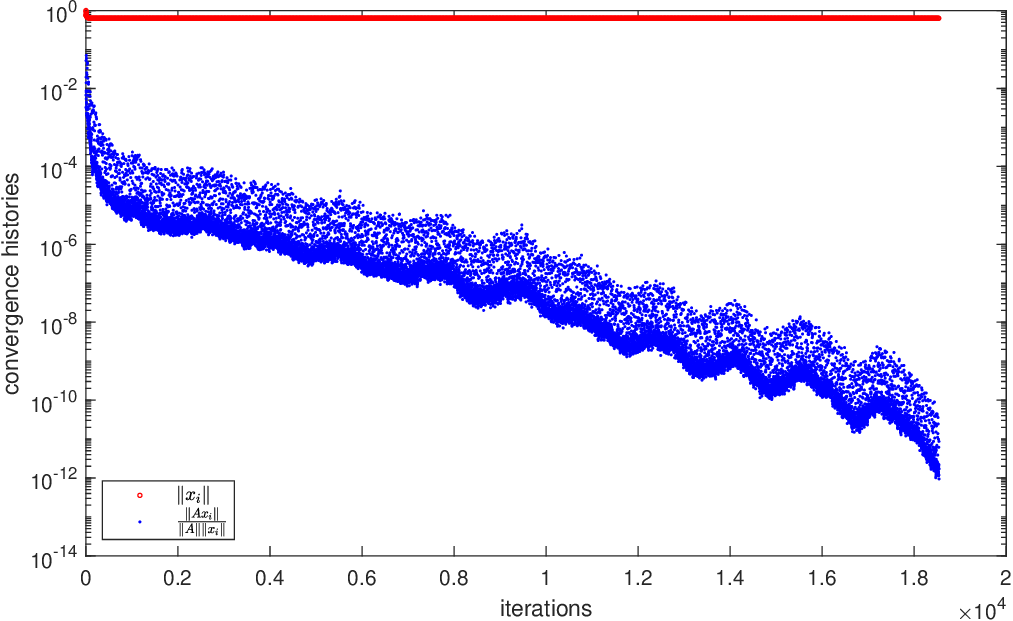}
	\caption{Convergence curves of LSQRNS for ``Hamrle3'' (left) 
		and ``lp1'' (right).}\label{fig2}
\end{figure}

Figure~\ref{fig2} depicts the convergence profiles of 
LSQRNS for two highly challenging cases: ``Hamrle3'' (left) and ``lp1'' (right). 
The erratic convergence behavior evident in Table~\ref{table3} and 
Figure~\ref{fig2} corroborates their severe ill-conditioning.  
Despite such numerical adversities, LSQRNS maintains its robustness. 
Specifically, for the full-rank matrix ``Hamrle3'', it 
successfully halted via the early termination criterion \eqref{terminate}. 
In contrast, for the rank-deficient matrix ``lp1'', it continued 
iterating until the convergence criterion \eqref{convergence} was met. 
These highlight the robustness of LSQRNS and the reliability of 
its termination strategies when tackling highly ill-conditioned problems. 
 
\section{Conclusions}\label{sec:6}
In this paper, we proposed the LSQRNV algorithm to compute a numerical 
null vector for a large-scale, rank-deficient matrix $A$ from an 
arbitrary initial vector. 
We provided a rigorous convergence analysis of the method and established 
explicit accuracy bounds for the computed null vectors. 
For practical applications, we incorporated robust deflation 
techniques and reliable stopping criteria to develop the LSQRNS algorithm. 
This framework effectively determines the dimension of the numerical null 
space of $A$ and constructs an orthonormal basis for this space, 
for which the corresponding accuracy bounds are also established.  
Furthermore, with appropriate parameter settings, LSQRNS serves 
as a highly efficient tool for detecting column rank deficiency. 
Extensive numerical experiments validate our theoretical results 
and demonstrate the efficiency and effectiveness of the proposed 
algorithms.
 
Furthermore, the proposed LSQRNS algorithm possesses inherent 
structural advantages that are highly amenable to parallel computing. 
A well-designed parallel implementation is expected to yield 
substantial gains in computational efficiency.  
This prospect is particularly appealing when the matrix in 
question is ill-conditioned or features a large-dimensional 
null space. This constitutes our future work.
 
\section*{Declarations}

The author declares that she has no financial interests, 
and she has read and approved the final manuscript.  
All numerical experiments can be reproduced using the MATLAB
implementations of the LSQRNS and LSQRNV algorithms, which are publicly
available at   \url{https://github.com/JinzhiHuang/LSQRNS}.

 
\bibliographystyle{siamplain}
\bibliography{nullspace_ref}
\end{document}